\documentclass[10pt,a4paper]{article}
\usepackage[utf8]{inputenc}
\usepackage[hmargin=3cm,vmargin=2.6cm]{geometry}
\usepackage{amsmath,amssymb,amsthm}
\usepackage{booktabs,array,multicol}
\usepackage{microtype}
\usepackage{cite}
\usepackage[font=small,skip=4pt]{caption}
\usepackage{float}
\usepackage{placeins}
\usepackage[hidelinks]{hyperref}

\newtheorem{theorem}{Theorem}[section]
\newtheorem{proposition}[theorem]{Proposition}
\newtheorem{lemma}[theorem]{Lemma}
\newtheorem{corollary}[theorem]{Corollary}
\theoremstyle{definition}
\newtheorem{definition}[theorem]{Definition}

\newcommand{\Z}{\mathbb{Z}}
\newcommand{\Q}{\mathbb{Q}}
\newcommand{\C}{\mathbb{C}}
\DeclareMathOperator{\PAF}{PAF}
\DeclareMathOperator{\PSD}{PSD}

\newcommand{\Dbar}{\leavevmode\rlap{\kern-0.005em\raisebox{0.32ex}{\rule{0.25em}{0.06ex}}}D}
\newcommand{\Dj}{\Dbar okovi\'c}

\title{An exact compression-and-lifting procedure and \\ D-optimal designs of orders 198 and 170}
\author{Stelios D. Georgiou\thanks{Corresponding author: stelios.georgiou@rmit.edu.au} \ and Stella Stylianou\\
\small School of Science, RMIT University, Melbourne, Australia
}
\hypersetup{pdftitle={An exact compression-and-lifting procedure and D-optimal designs of orders 198 and 170},pdfauthor={Stelios D. Georgiou and Stella Stylianou}}
\date{}
\begin{document}
\maketitle

\begin{abstract}
We develop a compression-and-lifting procedure for constructing D-optimal complementary pairs on finite abelian groups of odd order. The procedure separates the Fourier conditions preserved by compression from those needed to recover the original binary sequences. For groups with a cyclic factor of order nine, integer matching conditions separate compatible classes of local sign assignments. Their exact sizes can be computed before full sequences are generated, allowing searches to discard incompatible pairs and select manageable classes. A bound on cosets on which a sequence is constant also excludes certain automorphisms as fixing symmetries. Applying the method yields supplementary difference sets with parameters $(99;43,42;36)$ and $(85;39,34;31)$, resolving two previously undecided parameter sets. These give the first D-optimal matrix of order $198$ and constructions with new parameters at the known order $170$. For the successful length-$99$ compression, the exact class condition removes $99.9888\%$ of candidate pairs while preserving every D-optimal lift. A reflection-invariant construction at length $85$ gives seven supplementary difference set classes and fourteen matrix classes under signed row and column permutations, paired by transposition. The class reductions guide computation; finding suitable compressions and matching their lifts remain search problems.
\end{abstract}

\noindent\textbf{Keywords:} D-optimal design; supplementary difference set; complementary sequences; compression; meet-in-the-middle search.\\
\textbf{MSC 2020:} 05B20, 05B10, 15B34, 62K05.

\section{Introduction}
For a $\pm1$ matrix of order $2v$, with $v$ odd, the Ehlich--Wojtas bound is
\begin{equation}\label{eq:EW}
 |\det M|\le 2^v(2v-1)(v-1)^{v-1}.
\end{equation}
A matrix attaining this bound is called a D-optimal design \cite{Eh64,Wo64}. Such matrices solve the maximal determinant problem at their orders and give saturated two-level designs in statistics; see \cite{KO07,BEHO21}. A standard construction uses two binary sequences, with entries in $\{\pm1\}$, whose periodic autocorrelations sum to $2$ at every nonzero shift. Their negative positions form a supplementary difference set (SDS).

Compression replaces each sequence by its sums over cosets of a subgroup. It preserves the Fourier data at characters trivial on that subgroup and is an established tool for constructing complementary pairs \cite{DK15a,DK15b,DK18}. The missing data must be recovered by assigning signs within each coset, called a fibre. This lifting step can dominate the computation. The successful compression at length $99$ in this paper leaves $3^{49}$ pairs of local sign assignments. Enumerating and comparing every pair is impractical. We address how to count, separate and match these assignments before performing the full autocorrelation tests.

Our construction procedure starts with a character-channel criterion on any finite abelian group of odd order. Each channel collects the Fourier data associated with one character of the compression subgroup. The criterion identifies the equations still needed after a complementary compressed pair has been chosen. It gives a complete test for recovering the original sequences from local sign patterns. The Fourier decomposition is standard \cite{DK15a,DK18}. We use it to derive exact counts and matching conditions for these patterns, so that searches can reject incompatible pairs before generating full sequences.

Write $\Z_n$ for the integers modulo $n$. For groups $\Z_9\times H$, where $H$ is abelian of odd order $w>1$, each three-entry fibre with mixed signs has one minority sign and three possible positions for it. Within each compressed row, a row group consists of assignments with the same signed counts of these positions. The counts determine a length-nine compression, whose relevant spectral values are encoded by a key of three integers. Generating functions give the number of lifts with each key. At most $(w+1)^6$ combinations of row groups are needed per side, without enumerating the exponentially many full lifts. Within compatible classes, meet-in-the-middle matching generates the two lists separately and looks for complementary autocorrelation values. This avoids comparing their entire Cartesian product. Another branch imposes reflection symmetry on five-entry fibres and derives an exact integer test for the remaining conditions. Both branches give researchers constructing difference families explicit rules for reducing and organising the lifting computation.

A structural bound limits the number of fibres on which all signs agree. It excludes a family of automorphisms that could otherwise be proposed as fixing symmetries of a block. Under a coprimality condition, the exclusion also holds after translating the block. This gives a way to reject impossible symmetry assumptions before a search begins and complements earlier multiplier methods \cite{KSWX97,GK99}. Related compression searches for Legendre pairs, whose binary sequences have combined autocorrelation $-2$ away from zero, use prescribed row and column sums \cite{TKBG21}. Further compression properties are studied in \cite{KGG25,KGGG26}. Our class counts and matching targets concern the different autocorrelation value $2$ required for D-optimal pairs.

Applying the method gives SDSs with parameters $(99;43,42;36)$ and $(85;39,34;31)$, both undecided in \cite{Do22}. The former produces a D-optimal matrix of order $198$, the smallest unresolved order recorded in \cite{DK15b,Do22}. The latter settles a parameter set at order $170$, where D-optimal matrices were already known \cite{GK99}. The recorded length-$85$ collection contains seven SDS classes. Its fourteen matrix representatives are pairwise inequivalent under signed row and column permutations and form seven pairs under transposition. This classification concerns the recorded collection, not all matrices of order $170$.

For the successful length-$99$ compression, the exact key reduces $3^{49}\simeq2.3930\cdot10^{23}$ lift pairs to $2.6710\cdot10^{19}$, a factor of about $8{,}959$, without losing any D-optimal lift. The reported discovery cost is approximately $3{,}200$ CPU-hours, meaning processor time summed over all workers. The reduction in candidates is not a measured runtime speed-up. Computation is still needed to find compressed pairs and match their lifts, with costs affected by storage and pruning. The discovery thresholds select a subset of classes, so the reported run is not globally exhaustive.

Section~\ref{sec:prelim} recalls the standard identities. Section~\ref{sec:channels} gives the lifting criterion and structural bounds. Sections~\ref{sec:lift99} and~\ref{sec:lift85} develop the two constructive branches, which are applied and verified in Section~\ref{sec:designs}. Section~\ref{sec:open} discusses their scope and further applications.

\section{Preliminaries}\label{sec:prelim}

\subsection{Autocorrelation on finite abelian groups}
Let $G$ be a finite abelian group, written additively, and let $f\colon G\to\C$. A character is a homomorphism from $G$ to the unit circle; $\widehat G$ denotes the group of these characters, whose trivial element is the constant character $1$. Write $\zeta_n=\exp(2\pi i/n)$, where $i^2=-1$. On $\Z_n$ we index characters by $\chi_s(j)=\zeta_n^{sj}$. Define the periodic autocorrelation function (PAF), Fourier transform and power spectral density (PSD) by
\[
\PAF_f(k)=\sum_{g\in G}f(g)\overline{f(g+k)},\qquad \hat f(\chi)=\sum_{g\in G}f(g)\chi(g),\qquad \PSD_f(\chi)=|\hat f(\chi)|^2 .
\]
The energy of $f$ is $\PAF_f(0)=\sum_g|f(g)|^2$. A pair $(f_1,f_2)$ is \emph{complementary with constants} $(P,c)$ if $\PAF_{f_1}(k)+\PAF_{f_2}(k)$ equals $P$ for $k=0$ and $c$ for every $k\ne0$.

The standard Fourier identity below converts the required autocorrelations
into spectral tests \cite{DK15a}. These tests later provide both matching
conditions and bounds for rejecting partial assignments.
\begin{lemma}\label{lem:fourier}
The pair $(f_1,f_2)$ is complementary with constants $(P,c)$ if and only if $\PSD_{f_1}(\chi)+\PSD_{f_2}(\chi)$ equals $P-c$ for every nontrivial character $\chi$ and $P+(|G|-1)c$ for the trivial one.
\end{lemma}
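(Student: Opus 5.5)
The plan is to go through the Wiener--Khinchin identity on $G$. Write $F(k)=\PAF_{f_1}(k)+\PAF_{f_2}(k)$. First I would compute the Fourier transform of a single autocorrelation, $\widehat{\PAF_f}(\chi)=\sum_k\PAF_f(k)\chi(k)$. Substituting the definition gives $\sum_{g,k}f(g)\overline{f(g+k)}\chi(k)$. With $h=g+k$, and using $\chi(h-g)=\chi(h)\overline{\chi(g)}$, this factors as
\[
\Bigl(\sum_g f(g)\overline{\chi(g)}\Bigr)\overline{\Bigl(\sum_h f(h)\overline{\chi(h)}\Bigr)}=|\hat f(\overline\chi)|^2=\PSD_f(\overline\chi).
\]
Since $\chi\mapsto\overline\chi$ is a bijection of $\widehat G$ fixing the trivial character, this conjugation only permutes the nontrivial characters. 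It will therefore not affect a statement quantified over all nontrivial characters. Summing over $f_1,f_2$ gives $\widehat F(\chi)=\PSD_{f_1}(\overline\chi)+\PSD_{f_2}(\overline\chi)$.

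For the forward direction, suppose $F=c+(P-c)\delta_0$. By orthogonality, $\sum_k\chi(k)$ equals $|G|$ if $\chi$ is trivial and $0$ otherwise. Hence $\widehat F(\chi)=(P-c)+c\sum_k\chi(k)$. This equals $P-c$ for nontrivial $\chi$ and $P+(|G|-1)c$ for the trivial character. Replacing $\chi$ by $\overline\chi$ gives the stated PSD values.

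For the converse, I would use Fourier inversion on $G$: $F(k)=|G|^{-1}\sum_{\chi}\widehat F(\chi)\overline{\chi(k)}$, which follows from the column orthogonality of the character table. The hypothesis prescribes $\widehat F$ at every character. It coincides with the transform of $c+(P-c)\delta_0$, so by injectivity of the transform $F=c+(P-c)\delta_0$. That is exactly complementarity with constants $(P,c)$.

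The only real care needed is bookkeeping. The PAF here conjugates the shifted term, so the transform naturally lands on $\overline\chi$ rather than $\chi$. I would handle this with the bijection remark above, rather than fighting sign conventions. I would also note that $\PSD_f(\overline\chi)=\PSD_f(\chi)$ when $f$ is real-valued, although the lemma does not need this. Everything else is standard finite-group orthogonality, for which I would simply cite \cite{DK15a}.
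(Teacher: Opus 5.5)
Your proof is correct: the paper gives no proof of this lemma and cites it from \cite{DK15a} as a standard Fourier identity, and your route is the usual argument behind that citation, namely deriving $\widehat{\PAF_f}(\chi)=\PSD_f(\overline{\chi})$, then using orthogonality for the forward direction and Fourier inversion for the converse. Your use of the bijection $\chi\mapsto\overline{\chi}$, which fixes the trivial character, correctly handles the only convention-dependent point.
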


For a subgroup $K\le G$ of order $m$, each coset is a \emph{fibre}; it is \emph{homogeneous} for a binary sequence when all its signs agree. Otherwise it is \emph{nonhomogeneous}. The \emph{compression} of $f$ along $K$ is $f_K\colon G/K\to\C$, defined by $f_K(g+K)=\sum_{h\in K}f(g+h)$. A binary \emph{lift} of $U$ is a function $a\colon G\to\{\pm1\}$ with $a_K=U$.

The following compression identities retain exactly the Fourier data needed
for the smaller search \cite[Theorems 3 and 4]{DK15a},
\cite[Theorem 1]{DK15b}; see also \cite[Section~7]{DK18} for the group formulation.
\begin{theorem}\label{thm:compress}
For every character $\psi$ of $G/K$, $\PSD_{f_K}(\psi)=\PSD_f(\psi\circ\pi)$, where $\pi\colon G\to G/K$ is the quotient map, and $\PAF_{f_K}(k+K)=\sum_{h\in K}\PAF_f(k+h)$. In particular, if $(f_1,f_2)$ is complementary with constants $(P,c)$, then the compressed pair is complementary with constants $(P+(m-1)c,\,mc)$.
\end{theorem}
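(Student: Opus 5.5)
The plan is to verify the two identities directly from the definitions, and then obtain the complementarity statement from Lemma~\ref{lem:fourier}, or equivalently by summing the PAF identity.

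\textbf{Fourier identity.} Every character $\psi$ of $G/K$ pulls back to the character $\chi=\psi\circ\pi$ of $G$, and $\chi$ is trivial on $K$. Splitting $G$ into the fibres $g+K$ gives
\[
\hat f(\psi\circ\pi)=\sum_{g+K\in G/K}\ \sum_{h\in K} f(g+h)\,\psi(g+K)=\sum_{g+K}f_K(g+K)\,\psi(g+K)=\hat f_K(\psi).
\]
The value $\psi(g+K)$ is well defined because $\chi(g+h)=\chi(g)$ for $h\in K$. Taking squared moduli gives $\PSD_{f_K}(\psi)=\PSD_f(\psi\circ\pi)$.

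\textbf{PAF identity.} Expand $\PAF_{f_K}(k+K)$ as a sum over cosets $g+K$ of
\[
\sum_{h\in K} f(g+h)\,\sum_{h'\in K}\overline{f(g+k+h')}.
\]
Reindex with $x=g+h\in G$ and $h''=h'-h\in K$. As $(g+K,h,h')$ ranges over all choices, $(x,h'')$ ranges over $G\times K$ bijectively. The sum becomes
\[
\sum_{h''\in K}\sum_{x\in G}f(x)\overline{f(x+k+h'')}=\sum_{h''\in K}\PAF_f(k+h'').
\]
This is independent of the chosen coset representative $k$, since a change of representative only permutes $K$.

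\textbf{Complementarity.} Add the PAF identity for $f_1$ and $f_2$.
\begin{itemize}
\item When $k+K=K$, the shifts $h\in K$ include $0$ once and $m-1$ nonzero elements. The sum is therefore $P+(m-1)c$.
\item When $k\notin K$, all $m$ shifts $k+h$ are nonzero. The sum is therefore $mc$.
\end{itemize}

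As a cross-check, Lemma~\ref{lem:fourier} can be applied on $G/K$, which has order $|G|/m$:
\begin{itemize}
\item For nontrivial $\psi$, the pullback $\psi\circ\pi$ is nontrivial, so the PSD sum is $P-c=(P+(m-1)c)-mc$.
\item For trivial $\psi$, the PSD sum is $P+(|G|-1)c$. This equals $(P+(m-1)c)+(|G|/m-1)mc$, as required.
\end{itemize}

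I expect no real obstacle. The only step that needs care is the reindexing bijection in the PAF computation, together with the well-definedness of $f_K$ and $\psi\circ\pi$ on cosets.
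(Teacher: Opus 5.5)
Your proof is correct and complete. The three computations all check out: splitting $\hat f(\psi\circ\pi)$ over the fibres, the bijection $(g+K,h,h')\mapsto(g+h,\,h'-h)$ onto $G\times K$ in the PAF reindexing, and both derivations of the constants $(P+(m-1)c,\,mc)$.

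The paper gives no proof of its own here; it quotes the result from \cite{DK15a,DK15b,DK18}. Your Fourier computation is exactly the ``direct summation'' the paper later invokes for \eqref{eq:channel_transform}, specialised to the trivial channel $\rho=1$, $\eta_1=1$. So there is nothing to add.
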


\subsection{D-optimal pairs}
Let $v$ be odd and $G$ abelian of order $v$. A pair $(a,b)$ of functions $G \to \{\pm1\}$ is a \emph{D-optimal pair} if it is complementary with constants $(2v, 2)$. Lemma~\ref{lem:fourier} gives the necessary individual bounds
\[
 \PSD_a(\chi)\le 2v-2,\qquad \PSD_b(\chi)\le 2v-2
 \qquad(\chi\ne1).
\]
These bounds can discard a proposed lift without generating its partner.

The standard group-matrix construction turns such a pair into a matrix
attaining \eqref{eq:EW} \cite{Eh64,KO07}.
\begin{lemma}\label{lem:twocirc}
Let $(a,b)$ be a D-optimal pair on $G$, and let $A=(a(h-g))_{g,h\in G}$ and $B=(b(h-g))_{g,h\in G}$. Then
\[
M=\begin{pmatrix}A&B\\-B^{\top}&A^{\top}\end{pmatrix}
\quad\text{satisfies}\quad MM^{\top}=I_2\otimes\bigl((2v-2)I_v+2J_v\bigr),
\]
where $I_v$ is the identity matrix and $J_v$ is the all-ones matrix of order $v$, and $|\det M|=2^{v}(2v-1)(v-1)^{v-1}$.
\end{lemma}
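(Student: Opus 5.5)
We compute $MM^{\top}$ block by block, reduce everything to autocorrelation identities for the group matrices $A$ and $B$, and then take the determinant.

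\emph{Group-matrix algebra.} Write $A_f=(f(h-g))_{g,h\in G}$ for $f\colon G\to\{\pm1\}$. Since $G$ is abelian, any two such matrices commute. Indeed, $(A_fA_{f'})_{g,h}=\sum_{x}f(x-g)f'(h-x)$. Substituting $x\mapsto g+h-x$ swaps the roles of $f$ and $f'$, so $A_fA_{f'}=A_{f'}A_f$.

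Transposes are again group matrices: $A_f^{\top}=A_{\tilde f}$ with $\tilde f(x)=f(-x)$. In particular all of $A,B,A^{\top},B^{\top}$ pairwise commute.

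\emph{Reduction to the PAF.} Entrywise,
\[
(A_fA_f^{\top})_{g,h}=\sum_x f(x-g)f(x-h)=\PAF_f(g-h),
\]
after substituting $y=x-g$ and using that $f$ is real.

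\emph{The blocks of $MM^{\top}$.}
\begin{itemize}
\item The $(1,1)$ block is $AA^{\top}+BB^{\top}$. Its $(g,h)$ entry is $\PAF_a(g-h)+\PAF_b(g-h)$, which is $2v$ on the diagonal and $2$ off it, since $(a,b)$ is complementary with constants $(2v,2)$. Hence it equals $(2v-2)I_v+2J_v$.
\item The $(2,2)$ block is $B^{\top}B+A^{\top}A$. By commutativity this equals $BB^{\top}+AA^{\top}$, so it is the same matrix.
\item The $(1,2)$ block is $-AB+BA=0$, by commutativity. The $(2,1)$ block is its transpose, hence also $0$.
\end{itemize}
This gives $MM^{\top}=I_2\otimes\bigl((2v-2)I_v+2J_v\bigr)$.

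\emph{Determinant.} The matrix $(2v-2)I_v+2J_v$ has eigenvalue $2v-2+2v=4v-2$ on the all-ones vector. It has eigenvalue $2v-2$ with multiplicity $v-1$ on the orthogonal complement. Therefore
\[
\det\bigl((2v-2)I_v+2J_v\bigr)=(4v-2)(2v-2)^{v-1}=2^{v}(2v-1)(v-1)^{v-1}.
\]
Then $\det(M)^2=\det(MM^{\top})$ is the square of this quantity, and taking square roots gives $|\det M|=2^v(2v-1)(v-1)^{v-1}$, which attains \eqref{eq:EW}.

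\emph{Main obstacle.} The only delicate point is the vanishing of the off-diagonal blocks. This depends entirely on the commutativity of group matrices over an abelian group, together with the sign convention $-B^{\top}$ in $M$. We check it carefully through the substitution above rather than appealing to circulant facts, because $G$ need not be cyclic.
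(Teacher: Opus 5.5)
Your proof is correct and follows the standard argument the paper relies on. The paper does not prove this lemma itself but cites it to \cite{Eh64,KO07}; elsewhere it uses exactly your ingredients, namely commutativity of group matrices with one another and with their transposes (proof of Proposition~\ref{prop:reflection_appendix}) and the eigenvalue evaluation of $\det\bigl((2v-2)I_v+2J_v\bigr)$ (proof of Theorem~\ref{thm:main}).
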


For $G=\Z_v$, $A$ and $B$ are the circulants with first rows $a$ and $b$. Let $X=a^{-1}(-1)$ and $Y=b^{-1}(-1)$, with $|X|=r$ and $|Y|=s$. Write $\lambda_X(k)$ for the number of ordered pairs $(x,x')\in X^2$ with $x-x'=k$, and define $\lambda_Y$ similarly. The blocks form a supplementary difference set, or difference family, $(v;r,s;\lambda)$ when $\lambda_X(k)+\lambda_Y(k)=\lambda$ for every $k\ne0$. Since $\PAF_a(k)=v-4r+4\lambda_X(k)$, this is equivalent to D-optimality when $\lambda=r+s-(v-1)/2$.

For fixed ordered block sizes, \emph{SDS equivalence} is generated by independent translations, a common automorphism of $G$, and reversal $Z\mapsto-Z$ of either block \cite[Section~2]{Do11}. On $\Z_v$, an automorphism is multiplication by a unit modulo $v$. Reversing both blocks is already a common automorphism. Complementation and interchange normalise the parameters but add no identifications between the fixed ordered sizes here. Matrix equivalence means independent row and column permutations and sign changes. Transposition is allowed only when explicitly stated.

The trivial character gives
\begin{equation}\label{eq:sums}
 (\textstyle\sum a)^2+(\textstyle\sum b)^2=4v-2.
\end{equation}
At $v=99$, its only representation by two nonnegative odd squares is $394=13^2+15^2$. Complementing and interchanging blocks therefore gives sums $13,15$ and parameters $(99;43,42;36)$. At $v=85$, the parameters studied here give sums $7,17$, with $7^2+17^2=338$.

Compression energy fixes the numbers of the two fibre types at length $99$
\cite{DK15a}. These counts determine the size of the later lifting problem.
\begin{corollary}\label{cor:nu}
Let $(a,b)$ be a D-optimal pair on an abelian group of order $99$ and $K$ a subgroup of order $3$. The compressions $a_K$ and $b_K$ have entries $\pm1$ and $\pm3$, with exactly $\nu_1=49$ entries $\pm1$ and $\nu_3=17$ entries $\pm3$ in the two sequences together.
\end{corollary}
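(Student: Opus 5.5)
The plan is to apply Theorem~\ref{thm:compress} with $m=3$, $v=99$, $(P,c)=(2v,2)=(198,2)$ and then read off the compressed energy.

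\textbf{Step 1 (compressed constants).} The compressed pair $(a_K,b_K)$ lives on $G/K$, which has order $33$. By Theorem~\ref{thm:compress} it is complementary with constants
\[
(P+(m-1)c,\ mc)=(198+4,\ 6)=(202,\ 6).
\]
In particular
\[
\PAF_{a_K}(0)+\PAF_{b_K}(0)=\sum_{x}a_K(x)^2+\sum_{x}b_K(x)^2=202 .
\]

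\textbf{Step 2 (possible entries).} Each entry of $a_K$ or $b_K$ is a sum of three values $\pm1$. It is therefore odd with absolute value at most $3$, so it lies in $\{\pm1,\pm3\}$. Homogeneous fibres give $\pm3$ and nonhomogeneous fibres give $\pm1$.

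\textbf{Step 3 (solve for the counts).} Let $\nu_1$ and $\nu_3$ be the total numbers of entries of absolute value $1$ and $3$ across both sequences. There are $2\cdot33=66$ entries in all, so
\[
\nu_1+\nu_3=66 .
\]
The energy identity from Step 1 gives
\[
\nu_1+9\nu_3=202 .
\]
Subtracting, $8\nu_3=136$, hence $\nu_3=17$ and $\nu_1=49$.

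This matches the $3^{49}$ count of local sign assignments mentioned in the introduction: each of the $49$ mixed fibres has three positions for its minority sign.

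\textbf{Main obstacle.} There is no real obstacle. The only point needing care is to apply the energy formula $P+(m-1)c$ with the D-optimal constants $(2v,2)$, and not with the trivial-character value $P+(|G|-1)c$. As a consistency check, the squared sums are preserved under compression, so $(\sum a_K)^2+(\sum b_K)^2=394$, in agreement with~\eqref{eq:sums}.
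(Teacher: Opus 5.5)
Your proof is correct and is essentially the paper's argument: it also gets the combined compressed energy $2v+2(m-1)=202$ from Theorem~\ref{thm:compress} and solves $\nu_1+\nu_3=66$, $\nu_1+9\nu_3=202$ (see the proofs of Corollary~\ref{thm:subgroup_three} and Theorem~\ref{thm:twolevel}). Your explicit remark that each compressed entry is a sum of three signs, hence lies in $\{\pm1,\pm3\}$, supplies the step the paper leaves implicit.
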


By Theorem~\ref{thm:compress}, the compressions of a D-optimal pair for $v=99$ to length $d=3,9,11,33$ have odd entries of absolute value at most $99/d$, keep the sums $13$ and $15$, and are complementary with constants
\begin{equation}\label{eq:const}
(P_d,c_d)=(262,66),\ (218,22),\ (214,18),\ (202,6)\qquad\text{for } d=3,\,9,\,11,\,33.
\end{equation}

\section{Compression and exact lifting}\label{sec:channels}

Compression retains the sums on subgroup cosets but loses the positions of
the signs. The standard character decomposition \cite{DK15a,DK18} separates
the retained Fourier data from the missing data. We express this separation
as a lifting criterion, then use local sign patterns to enforce its
remaining conditions.

Let $G$ be abelian of odd order $v$, let $K\le G$ have order $m$, and put
$Q=G/K$, $d=|Q|$, with quotient map $\pi$. Let $a,b\colon G\to\{\pm1\}$,
and write $U_a=a_K$, $U_b=b_K$ for their compressions. For each character
$\rho\in\widehat K$, choose an extension $\eta_\rho\in\widehat G$, taking
$\eta_1=1$.  Such extensions exist for finite abelian groups.  Define
\begin{equation}\label{eq:channels}
 F_{a,\rho}(q)=\sum_{g\in\pi^{-1}(q)}a(g)\eta_\rho(g),
 \qquad q\in Q.
\end{equation}
A \emph{character channel} is the function in \eqref{eq:channels}; define $F_{b,\rho}$ analogously. The trivial channel is the ordinary compression $F_{a,1}=U_a$.
For $\psi\in\widehat Q$, direct summation gives
\begin{equation}\label{eq:channel_transform}
 \widehat{F_{a,\rho}}(\psi)
   =\hat a\bigl(\eta_\rho(\psi\circ\pi)\bigr).
\end{equation}
Thus each channel contains precisely the characters of $G$ restricting
to $\rho$ on $K$. The following criterion makes the remaining lifting
conditions explicit. Once the compressed pair is fixed, only its
nontrivial channels need to be matched.

\begin{theorem}\label{thm:channels}
Let $a,b\colon G\to\{\pm1\}$ satisfy
$(\sum_Ga)^2+(\sum_Gb)^2=4v-2$.  Then $(a,b)$ is a D-optimal pair if and
only if
\begin{align}
 \PAF_{U_a}(q)+\PAF_{U_b}(q)&=2m
       &&(q\ne0),\label{eq:compressed_channel}\\
 \PAF_{F_{a,\rho}}+\PAF_{F_{b,\rho}}
       &=(2v-2)\delta_0
       &&(\rho\ne1),\label{eq:other_channels}
\end{align}
where $\delta_0$ is one at the identity of $Q$ and zero elsewhere.
In this case the combined compression energy is $2v+2(m-1)$.
Extensions may be chosen with
$\eta_{\bar\rho}=\overline{\eta_\rho}$; with this choice it suffices to
check one character from each pair $\{\rho,\bar\rho\}$ in
\eqref{eq:other_channels}.
\end{theorem}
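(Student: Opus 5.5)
We reduce everything to Lemma~\ref{lem:fourier} with $(P,c)=(2v,2)$, which characterises D-optimality by
\[
\PSD_a(\chi)+\PSD_b(\chi)=2v-2 \quad(\chi\ne1),
\]
with the trivial-character condition being exactly the hypothesis $(\sum a)^2+(\sum b)^2=4v-2$. The first step is to partition $\widehat G\setminus\{1\}$ according to restriction to $K$. Every $\chi\in\widehat G$ restricts to some $\rho\in\widehat K$. Since $\chi\overline{\eta_\rho}$ is trivial on $K$, it factors as $\psi\circ\pi$ for a unique $\psi\in\widehat Q$. Hence the map $(\rho,\psi)\mapsto\eta_\rho(\psi\circ\pi)$ is a bijection $\widehat K\times\widehat Q\to\widehat G$. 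By \eqref{eq:channel_transform}, $\PSD_a(\eta_\rho(\psi\circ\pi))=\PSD_{F_{a,\rho}}(\psi)$, and likewise for $b$. D-optimality therefore splits into two families of conditions, one for each type of $\rho$.

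For $\rho=1$, the characters are $\psi\circ\pi$ with $\psi\ne1$. Using Theorem~\ref{thm:compress}, the condition reads $\PSD_{U_a}(\psi)+\PSD_{U_b}(\psi)=2v-2$ for $\psi\ne1$. At $\psi=1$ the sum is $4v-2$ by hypothesis. We apply Lemma~\ref{lem:fourier} on $Q$ in the reverse direction, solving $P-c=2v-2$ and $P+(d-1)c=4v-2$ with $dm=v$. This gives $c=2v/d=2m$ and $P=2v+2m-2$, which yields \eqref{eq:compressed_channel} and the stated energy $2v+2(m-1)$. The argument is reversible: if \eqref{eq:compressed_channel} holds, then the energy $P$ is forced, because $\PAF_{U}(0)$ sums with the off-zero values to $(\sum U)^2=(\sum a)^2$. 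Summing the PAFs over all of $Q$ gives $P+(d-1)2m=4v-2$, so $P=2v+2m-2$ automatically. One small point needs checking here: \eqref{eq:compressed_channel} only prescribes $q\ne0$, and this sum identity is what makes the omitted $q=0$ condition unnecessary.

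For $\rho\ne1$, every character $\eta_\rho(\psi\circ\pi)$ is nontrivial, including when $\psi=1$. The requirement is thus $\PSD_{F_{a,\rho}}(\psi)+\PSD_{F_{b,\rho}}(\psi)=2v-2$ for all $\psi\in\widehat Q$. By Fourier inversion on $Q$, a constant PSD sum $2v-2$ is equivalent to the PAF sum being $(2v-2)\delta_0$; this is the same computation as in Lemma~\ref{lem:fourier}, now with no separate trivial-character case. Combining the two families proves the equivalence.

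For the last claim, we first check that the extensions can be chosen compatibly. Pick one representative from each pair $\{\rho,\bar\rho\}$ with $\rho\ne\bar\rho$, extend it arbitrarily, and set $\eta_{\bar\rho}=\overline{\eta_\rho}$. Because $|K|$ is odd, no nontrivial $\rho$ is real, so no pair collapses. Since $a$ and $b$ are real, $F_{a,\bar\rho}=\overline{F_{a,\rho}}$. Moreover $\PAF_{\bar f}(k)=\overline{\PAF_f(k)}$, so conjugating \eqref{eq:other_channels} for $\rho$, whose right side is real, gives it for $\bar\rho$. The main obstacle is bookkeeping rather than depth: the points to handle carefully are the character bijection and the fact that channels with $\rho\ne1$ include $\psi=1$, together with the reversibility of the energy argument.
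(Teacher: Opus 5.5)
Your proposal is correct and follows the paper's proof: the same partition of $\widehat G$ into the cosets $\eta_\rho K^\perp$, the same step where summing the compressed autocorrelations over $Q$ forces the energy $2v+2(m-1)$ in the converse, Fourier inversion for the channels with $\rho\ne1$, and conjugation via real-valuedness for the final claim. The only minor differences are that you obtain the compressed constants $(2v+2m-2,\,2m)$ by solving Lemma~\ref{lem:fourier} on $Q$ rather than quoting Theorem~\ref{thm:compress}, and you explicitly construct the conjugate-compatible extensions, whose existence the paper asserts without construction.
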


\begin{proof}
The cosets $\eta_\rho K^\perp$, for $\rho\in\widehat K$, partition
$\widehat G$, where $K^\perp$ is the set of characters trivial on $K$.
If $(a,b)$ is D-optimal, its combined PSD is $2v-2$ at every nontrivial
character and $4v-2$ at the trivial character.  Compression gives
\eqref{eq:compressed_channel} and energy $2v+2(m-1)$.
For $\rho\ne1$ every character in $\eta_\rho K^\perp$ is nontrivial,
so \eqref{eq:channel_transform} and Fourier inversion give
\eqref{eq:other_channels}.

Conversely, summing the compressed autocorrelations over $Q$ gives
$4v-2$.  Hence \eqref{eq:compressed_channel} forces their combined
value at zero to be
$4v-2-2m(d-1)=2v+2(m-1)$.  The compression channel therefore has
combined PSD $2v-2$ at every nontrivial character of $Q$.
Equation~\eqref{eq:other_channels} supplies the same value on each
remaining character coset.  Together with the assumed sums, these are
exactly the Fourier conditions for a D-optimal pair.
Finally, the real-valuedness of $a,b$ gives
$F_{a,\bar\rho}=\overline{F_{a,\rho}}$ and the analogous identity for
$b$.  Since $m$ is odd, all nontrivial characters of $K$ occur in distinct
conjugate pairs, proving the last assertion.
\end{proof}

For a subgroup of order three, only one nontrivial channel needs to be
checked, because the other is its conjugate. The next consequence also
counts the local choices. It gives the size of the unrestricted lifting
problem before class matching or symmetry restrictions are applied.

\begin{corollary}\label{thm:subgroup_three}
Suppose $|K|=3$.  Let $U_a,U_b\colon Q\to\{\pm1,\pm3\}$ satisfy
\eqref{eq:compressed_channel} and
$(\sum_QU_a)^2+(\sum_QU_b)^2=4v-2$.  Across the two compressions there
are exactly
\[
 \nu_1=\frac{v-1}{2},\qquad \nu_3=\frac{v+3}{6}
\]
entries of absolute value one and three, respectively.  Consequently
there are exactly $3^{(v-1)/2}$ ordered pairs of sign lifts.
Choose the extension $\eta$ used in \eqref{eq:channels} for a nontrivial character of $K$, and write
$F_a=F_{a,\eta|_K}$.  A lift is D-optimal if and only if
\begin{equation}\label{eq:three_lift}
 \PAF_{F_a}+\PAF_{F_b}=(2v-2)\delta_0.
\end{equation}
On a fibre with $|U_a(q)|=3$, $F_a(q)=0$.  On a fibre with
$|U_a(q)|=1$, let $g_q$ be the unique position with sign $-U_a(q)$.
Then
\begin{equation}\label{eq:minority_phase}
 F_a(q)=-2U_a(q)\eta(g_q).
\end{equation}
\end{corollary}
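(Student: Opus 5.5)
The plan is to read everything off Theorem~\ref{thm:channels} with $m=3$ and $d=v/3$, plus one short computation on a single fibre.

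\emph{Counting.} With $m=3$, the combined compression energy is $\sum_Q U_a^2+\sum_Q U_b^2=2v+4$. The hypotheses of the corollary are exactly condition \eqref{eq:compressed_channel} and the trivial-character condition. As in the converse half of the proof of Theorem~\ref{thm:channels}, summing $\PAF_{U_a}+\PAF_{U_b}$ over $Q$ gives $(\sum U_a)^2+(\sum U_b)^2=4v-2$. Subtracting $2m(d-1)=6(v/3-1)=2v-6$ leaves $2v+4$ at zero. Entries of a compression along a subgroup of order three are odd and of absolute value at most three, so they lie in $\{\pm1,\pm3\}$. The two compressions together have $2d=2v/3$ entries, which gives
\[
\nu_1+\nu_3=\tfrac{2v}{3},\qquad \nu_1+9\nu_3=2v+4 .
\]
Hence $8\nu_3=2v+4-\tfrac{2v}{3}=\tfrac{4v+12}{3}$, so $\nu_3=(v+3)/6$ and $\nu_1=(v-1)/2$.

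Next we count lifts fibrewise. A fibre with $U=\pm3$ has a unique lift, since all three signs are equal. A fibre with $U=\pm1$ has exactly one minority sign among its three positions, so it has three lifts. The choices on different fibres, and in the two sequences, are independent. This gives $3^{\nu_1}=3^{(v-1)/2}$ ordered pairs.

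\emph{Lifting criterion.} A lift automatically has compressions $U_a,U_b$, so it satisfies \eqref{eq:compressed_channel} and the sum condition. Theorem~\ref{thm:channels} therefore reduces D-optimality to \eqref{eq:other_channels} for $\rho\ne1$. $\widehat K$ has exactly two nontrivial characters, and they are conjugate. By the last assertion of the theorem, with $\eta_{\bar\rho}=\overline{\eta_\rho}$, checking one of them suffices, and this is \eqref{eq:three_lift}. The one point needing care is the choice of extension. If a different extension $\eta'$ of the same $\rho$ is used, then $\eta'=\eta\,(\psi\circ\pi)$ for some $\psi\in\widehat Q$. The channel then changes by the unimodular factor $\psi(q)$ pointwise, and the PAF changes by the factor $\psi(k)$ at shift $k$. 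Since $\psi(0)=1$, the condition $(2v-2)\delta_0$ is unaffected, so the criterion holds for any chosen extension. Using the conjugate character gives the complex conjugate equation, which is equivalent.

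\emph{Fibre values.} On a fibre $g+K$, $\eta$ takes the values $\eta(g)\rho(h)$ for $h\in K$, that is, $\eta(g)$ times the three cube roots of unity. These values sum to zero. If the fibre is homogeneous, $F_a(q)=\pm\sum\eta=0$. If $|U_a(q)|=1$, write $a=U_a(q)$ off $g_q$ and $a(g_q)=-U_a(q)$. Then
\[
F_a(q)=U_a(q)\sum_{\text{fibre}}\eta-2U_a(q)\eta(g_q)=-2U_a(q)\eta(g_q),
\]
which is \eqref{eq:minority_phase}.

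There is no real obstacle here. The only step needing attention is the independence from the choice of extension noted above.
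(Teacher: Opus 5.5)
Your proposal is correct and follows the paper's proof: the combined compression energy $2v+4$ gives the two linear equations for $\nu_1,\nu_3$, each fibre has one or three lifts according to the minority position, the nontrivial character sums to zero on each coset of $K$, and Theorem~\ref{thm:channels} reduces the two conjugate channels to \eqref{eq:three_lift}. Your added check that the criterion does not depend on the chosen extension is harmless. Strictly, under the paper's convention $\PAF_f(k)=\sum_g f(g)\overline{f(g+k)}$ the factor is $\overline{\psi(k)}$ rather than $\psi(k)$, which changes nothing in the argument.
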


\begin{proof}
The compression energy is $2v+4$, so
$\nu_1+\nu_3=2v/3$ and $\nu_1+9\nu_3=2v+4$, yielding the stated
counts.  A homogeneous fibre has one sign pattern; every other fibre
has three, according to the position of its minority sign.  The
nontrivial character sums to zero on each coset of $K$, which gives
both channel formulas.  There are only two nontrivial characters of
$K$, and they are conjugate, so Theorem~\ref{thm:channels} reduces their
conditions to \eqref{eq:three_lift}.
\end{proof}

For the cyclic prime-power factor used below, the channels have a
convenient explicit form. Let $p$ be an odd prime and $k\ge2$. Write
$G=\Z_{p^k}\times H$ with $H$ finite abelian of odd order, put $e=p^{k-1}$,
and write $j=x+ey$, where $0\le x<e$ and $y\in\Z_p$. Indexing the
characters of $K=e\Z_{p^k}\times\{0\}$ by $r\in\Z_p$, equation
\eqref{eq:channels} becomes
\begin{equation}\label{eq:explicit_channels}
 F_{a,r}(x,h)=\zeta_{p^k}^{rx}
       \sum_{y\in\Z_p}a(x+ey,h)\zeta_p^{ry},
 \qquad
 \widehat{F_{a,r}}(u,\psi)=\hat a(r+pu,\psi).
\end{equation}
Here $u\in\Z_e$, $\psi\in\widehat H$, and $\hat a(s,\psi)$ uses the
character $(j,h)\mapsto\zeta_{p^k}^{sj}\psi(h)$. The channel $r=0$ is the ordinary compression, and the $p$ channels cover
every character exactly once.  Section~\ref{sec:lift99} specialises
these channels to $p=3$, $k=2$, and uses the minority-sign positions to
group the lifts before matching their remaining correlation conditions.

The compression energy limits how many fibres can have constant signs.
This bound can exclude a proposed symmetry without enumerating blocks.
It also shows that no nonzero translation fixes a block when $v>3$;
equivalently, each block has a trivial translation stabiliser.

\begin{proposition}\label{prop:homogeneous}
Let $(a,b)$ be a D-optimal pair on an abelian group of odd order
$v=md$, and compress along a subgroup of odd order $m>1$.
If $L$ counts the entries of absolute value $m$ in the two
compressions together, then
\begin{equation}\label{eq:homogeneous_bound}
 L\le\left\lfloor\frac{2(d+1)}{m+1}\right\rfloor.
\end{equation}
If $v>3$, neither block has a nontrivial translation stabiliser.
\end{proposition}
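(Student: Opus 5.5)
The bound comes from the combined compression energy. By Theorem~\ref{thm:channels}, or directly from Theorem~\ref{thm:compress} with constants $(2v,2)$, the compressions satisfy
\[
\sum_{q\in Q}\bigl(U_a(q)^2+U_b(q)^2\bigr)=2v+2(m-1).
\]
There are $2d$ entries in total across the two compressions. Each entry is a sum of $m$ signs, so it is congruent to $m$ modulo $2$ and hence odd. So every entry has square at least $1$, and the $L$ homogeneous entries contribute $m^2$ each. This gives
\[
Lm^2+(2d-L)\le 2v+2m-2=2md+2m-2 .
\]
Rearranging, $L(m^2-1)\le 2md+2m-2-2d=2(m-1)(d+1)$. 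Dividing by $m^2-1=(m-1)(m+1)>0$ yields $L\le 2(d+1)/(m+1)$, and since $L$ is an integer the floor bound \eqref{eq:homogeneous_bound} follows. This step is routine.

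For the stabiliser claim, suppose a nonzero $t\in G$ fixes block $a$, so $a(g+t)=a(g)$ for all $g$. Let $K=\langle t\rangle$, with odd order $m>1$ since $|G|$ is odd. Every coset of $K$ is then homogeneous for $a$, so all $d$ entries of $U_a$ have absolute value $m$ and $L\ge d$. The bound gives $d\le 2(d+1)/(m+1)$, that is, $d(m+1)\le 2d+2$, or $d(m-1)\le 2$. Since $m\ge3$, this forces $m=3$ and $d=1$, so $v=3$. This contradicts $v>3$. The same argument applies to $b$.

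The only point needing care is the parity fact that every compressed entry is odd, since $m$ is odd. This is what justifies the lower bound $1$ on non-homogeneous squares. One could refine it by counting the $b$-compression contributions, but that is not needed. Otherwise the argument is a direct energy count and I expect no real obstacle.
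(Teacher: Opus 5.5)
Your proposal is correct and follows essentially the same route as the paper. You compare the lower bound $2d+L(m^2-1)$ with the compressed energy $2v+2(m-1)$, and then apply the bound to $K=\langle t\rangle$, where all $d$ fibres of a fixed block are homogeneous; this gives $(m-1)d\le 2$, which forces $m=3$ and $d=1$.
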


\begin{proof}
All $2d$ compressed entries are odd, so their combined energy is at
least $2d+L(m^2-1)$.  Compression gives energy $2md+2(m-1)$,
and rearranging proves \eqref{eq:homogeneous_bound}.
If a block were fixed by a nonzero translation, let $K$ be the cyclic
subgroup generated by that translation and write $m=|K|$, $d=v/m$.
All $d$ fibres of that block would be homogeneous, forcing
$d(m+1)\le2(d+1)$, or $(m-1)d\le2$.
Since $m>1$ and $m,d$ are odd, this is possible only for $m=3,d=1$.
\end{proof}

A \emph{fixing multiplier} is an automorphism $h$ that leaves a block $Z$
unchanged; fixing up to translation means $hZ=Z+t$ for some $t\in G$.
The next result applies the fibre bound to a prime-power factor. It tells
a search which multiplier restrictions cannot produce a D-optimal pair.

\begin{proposition}\label{prop:multiplier}
Let $p$ be an odd prime, $k\ge2$, and $H$ an abelian group of odd
order, with $v=p^k|H|>9$.  On $G=\Z_{p^k}\times H$ put
\[
 h(j,z)=((1+p^{k-1})j,z).
\]
Neither block of a D-optimal pair is fixed by a nonidentity element of
$\langle h\rangle$.  If $p\nmid|H|$, the same exclusion holds for
fixing up to translation.
\end{proposition}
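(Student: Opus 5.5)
The plan is to turn a fixing multiplier into many homogeneous fibres for the subgroup $K=e\Z_{p^k}\times\{0\}$, with $e=p^{k-1}$ and $|K|=p$, and then contradict the bound \eqref{eq:homogeneous_bound} of Proposition~\ref{prop:homogeneous}. Translations will be removed by a cocycle argument.

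\textbf{Orbits of $\langle h\rangle$.} Since $(1+e)^p\equiv 1\pmod{p^k}$, the map $h$ has order $p$. Moreover $h^t(j,z)=(j+tej,z)$. Two cases arise.
\begin{itemize}
\item If $p\mid j$, then $ej\equiv0\pmod{p^k}$, so $(j,z)$ is fixed.
\item If $p\nmid j$, then $tej$ runs over all of $e\Z_{p^k}$ as $t$ runs over $\Z_p$. So the orbit of $(j,z)$ under $\langle h\rangle$ is exactly its fibre $(j,z)+K$, and the same holds for the subgroup generated by any single $h^t$ with $p\nmid t$.
\end{itemize}

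\textbf{Fixed blocks.} Suppose a block $Z$ is fixed by a nonidentity $h^t$. Then $Z$ is a union of $\langle h\rangle$-orbits, so every fibre $x+K$ with $p\nmid x$ is homogeneous for that block. Put $d=v/p=p^{k-1}|H|$. The fibres are indexed by $x\in\Z_e$ and $z\in H$, and those with $p\nmid x$ number $d(p-1)/p$. Hence $L\ge d(p-1)/p$. Proposition~\ref{prop:homogeneous} with $m=p$ gives $L\le 2(d+1)/(p+1)$, so we need
\[
d(p^2-2p-1)>2p
\]
to get a contradiction. We check this in two cases.
\begin{itemize}
\item For $p=3$ the inequality reads $2d>6$, i.e.\ $d>3$, i.e.\ $v>9$. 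This is exactly where the hypothesis $v>9$ is used.
\item For $p\ge5$, we have $p^2-2p-1\ge14$ and $d\ge p\ge5$, so the inequality holds automatically.
\end{itemize}

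\textbf{Fixing up to translation.} Now assume $p\nmid|H|$ and $h^rZ=Z+t$ with $p\nmid r$. Write $g=h^r$, which also has order $p$. Iterating,
\[
Z=g^pZ=Z+s,\qquad s=(1+g+\dots+g^{p-1})t.
\]
Proposition~\ref{prop:homogeneous} says a block has trivial translation stabiliser since $v>3$, so $s=0$. We compute $s$ for $t=(j,z)$.
\begin{itemize}
\item The $H$-component of $s$ is $pz$. Since $p\nmid|H|$, $pz=0$ forces $z=0$.
\item The cyclic component is $j\sum_i(1+re)^i\equiv j\bigl(p+re\,p(p-1)/2\bigr)\equiv pj\pmod{p^k}$, using $ep=p^k$ and $p$ odd. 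So $pj\equiv 0$, i.e.\ $j\in e\Z_{p^k}$.
\end{itemize}
Thus $t\in K$. The image of $g-1$ is $\{(rej,0)\}=K$, so we can choose $u$ with $(g-1)u=-t$. Then $Z'=Z+u$ satisfies $gZ'=Z+t+gu=Z'$. A translate of a block is still a block of a D-optimal pair, since translation preserves PAF. So $Z'$ is fixed by a nonidentity element of $\langle h\rangle$, contradicting the first part.

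The main obstacle is the computation $s=0\Rightarrow t\in K$, and in particular the congruence $\sum_i(1+re)^i\equiv p\pmod{p^k}$. That congruence is where oddness of $p$ enters, and the $H$-component step is where the coprimality hypothesis is needed.
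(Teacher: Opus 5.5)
Your proof is correct and follows the paper's argument essentially step for step. You show that each orbit with $p\nmid j$ is a full $K$-fibre, feed this into the homogeneous-fibre bound to get the same inequality $(p^2-2p-1)d>2p$, and then remove the translation using $1+g+\cdots+g^{p-1}=pI$ and $\ker(pI)=K=(g-1)G$. The only differences are cosmetic: you treat a general power $h^r$ explicitly and split the kernel computation into its $\Z_{p^k}$ and $H$ components, whereas the paper argues for $h$ and remarks that every nonidentity power behaves identically.
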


\begin{proof}
The automorphism $h$ has order $p$.  Compress along
$K=p^{k-1}\Z_{p^k}\times\{0\}$ and put $d=p^{k-1}|H|$.
Whenever $p\nmid j$, the $h$-orbit of $(j,z)$ is its entire coset
of $K$.  A fixed block would therefore give at least
$(p-1)d/p$ homogeneous fibres.  This exceeds the bound
$2(d+1)/(p+1)$ precisely when
\[
 (p^2-2p-1)d>2p.
\]
For $p=3$ this follows from $d>3$, and for $p\ge5$ from $d\ge p$.
The assumption $v>9$ covers the former condition.
Every nonidentity power of $h$ has the same orbits.

Suppose now that $p\nmid|H|$ and $hZ=Z+t$ for one block $Z$.
Its translation stabiliser is trivial by
Proposition~\ref{prop:homogeneous}, so iteration gives
$(1+h+\cdots+h^{p-1})t=0$.
The binomial identity
$(1+p^{k-1})^i\equiv1+ip^{k-1}\pmod{p^k}$ shows that this
endomorphism is $pI$.  Consequently
\[
 t\in\ker(pI)=p^{k-1}\Z_{p^k}\times\{0\}=(h-I)G.
\]
Choose $c$ with $(h-I)c=-t$.  Then $h(Z+c)=Z+c$, contradicting
the first part.  For every nonidentity power of $h$, the same
calculation gives $pI$ and the same image $(h-I)G$.
\end{proof}

In particular, on $\Z_9\times H$ with $|H|>1$, multiplication by
$4$ or $7$ in the first factor cannot fix either block; when
$3\nmid|H|$, translating the block does not remove the obstruction.
At length $99$ this excludes the order-three fixing multipliers
$67$ and $34$.  It is a restriction on possible fixing symmetries,
and does not exclude other structured ways of organising the search.

\section{Exact digit classes and matching}\label{sec:lift99}
\subsection{Local lifts and channels}
We give the digit formulas in the coordinates used for the length-$99$ application; Proposition~\ref{prop:general9} establishes their extension to $\Z_9\times H$ and bounds the class-counting work.

We identify $\Z_{99}$ with $\Z_9\times\Z_{11}$ through $i\mapsto(i\bmod 9,\,i\bmod 11)$, and write elements of $\Z_9$ as $j=x+3y$ with $x\in\{0,1,2\}$ and $y\in\Z_3$. A function $a\colon\Z_{99}\to\{\pm1\}$ is then a $9\times11$ array $a(j,z)$. Its compressions along the subgroups of orders $11$, $9$ and $3$ are the row sums $A_9(j)=\sum_z a(j,z)$, the column sums $A_{11}(z)=\sum_j a(j,z)$, and the $3\times11$ array
\[
U_a(x,z)=\sum_{y\in\Z_3}a(x+3y,z),
\]
a function on $\Z_3\times\Z_{11}\cong\Z_{33}$. Further $A_3(x)=\sum_zU_a(x,z)$. For $b$ we write $B_9$, $B_{11}$, $U_b$ and $B_3$. The characters of $\Z_9\times\Z_{11}$ are $\chi_{s,t}(j,z)=\zeta_9^{sj}\zeta_{11}^{tz}$, and we write $\hat a(s,t)$ for $\hat a(\chi_{s,t})$.

The compression identity \cite{DK15a} identifies which arrays determine
each Fourier value. This prevents retesting conditions already imposed by
the smaller compressions and locates the equations still needed in the lift.
\begin{lemma}\label{lem:split}
The value $\hat a(s,t)$ depends only on $A_9$ if $t=0$, only on $A_{11}$ if $s=0$, and only on $U_a$ if $3\mid s$. Of the $98$ nontrivial characters, $8$ have $t=0$, $10$ have $s=0$, $20$ have $s\in\{3,6\}$ and $t\ne0$, and the remaining $60$ have $s$ of order $9$ and $t\ne0$.
\end{lemma}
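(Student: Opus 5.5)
The approach is to apply Theorem~\ref{thm:compress} three times, once per subgroup, and then count characters directly. For a subgroup $K\le G=\Z_9\times\Z_{11}$, the theorem says that $\hat a(\chi)$ depends only on the compression $a_K$ whenever $\chi$ is trivial on $K$, since $\chi=\psi\circ\pi$ for some $\psi\in\widehat{G/K}$. Directly, $\hat a(\psi\circ\pi)=\sum_{q}\psi(q)\sum_{g\in\pi^{-1}(q)}a(g)=\widehat{a_K}(\psi)$. So for each of the three claims I only need to identify the subgroup $K$ and check that $\chi_{s,t}$ is trivial on it.

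The three cases are as follows.
\begin{itemize}
\item The subgroup of order $11$ is $\{0\}\times\Z_{11}$. The character $\chi_{s,t}$ is trivial on it exactly when $\zeta_{11}^{tz}=1$ for all $z$, that is, when $t=0$. The compression along this subgroup is the row-sum function $A_9$.
\item The subgroup of order $9$ is $\Z_9\times\{0\}$. Here $\chi_{s,t}$ is trivial exactly when $s=0$, and the compression is $A_{11}$.
\item The subgroup of order $3$ is $K=3\Z_9\times\{0\}$. We have $\chi_{s,t}(3y,0)=\zeta_9^{3sy}=\zeta_3^{sy}$, which equals $1$ for all $y$ exactly when $3\mid s$. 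The compression along $K$ is $U_a(x,z)=\sum_y a(x+3y,z)$, using the coset representatives $x\in\{0,1,2\}$. In that case $\hat a(s,t)=\sum_{x,z}\zeta_9^{sx}\zeta_{11}^{tz}U_a(x,z)$, because $\zeta_9^{s(x+3y)}=\zeta_9^{sx}$ when $3\mid s$. This agrees with \eqref{eq:explicit_channels} for the channel $r=0$.
\end{itemize}

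The counting is then a partition of the $98$ nontrivial pairs $(s,t)\in\Z_9\times\Z_{11}$, $(s,t)\ne(0,0)$:
\begin{itemize}
\item $t=0$, $s\ne0$: $8$ characters.
\item $s=0$, $t\ne0$: $10$ characters.
\item $s\in\{3,6\}$, $t\ne0$: $2\cdot10=20$ characters.
\item $s$ a unit modulo $9$ (six values, each of additive order $9$), $t\ne0$: $6\cdot10=60$ characters.
\end{itemize}
These sum to $8+10+20+60=98$, as required.

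No step is a real obstacle. The only care needed is to make sure the order-$3$ case is stated with the paper's coordinates $j=x+3y$, so that $U_a$ is literally the compression along $3\Z_9\times\{0\}$. This holds because, under the CRT identification, the subgroup $33\Z_{99}$ corresponds to $3\Z_9\times\{0\}$.
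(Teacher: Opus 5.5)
Your proof is correct and takes the same route as the paper. The paper gives no separate proof: it justifies the lemma by appeal to the compression identity (Theorem~\ref{thm:compress}) for the subgroups of orders $11$, $9$ and $3$, and leaves the character count as a direct check. You carry out both parts explicitly, and both are right: $\chi_{s,t}$ is trivial on $3\Z_9\times\{0\}$ exactly when $3\mid s$, and the partition $8+10+20+60=98$ is correct.
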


If $|U_a(x,z)|=3$, the three values $a(x+3y,z)$, $y\in\Z_3$, are equal. If $|U_a(x,z)|=1$, exactly one equals $-U_a(x,z)$; its index is the \emph{digit} $\rho_a(x,z)\in\Z_3$. Thus $U_a$ and its digits determine $a$. For a compression $U$, put $S(U)=\{(x,z):|U(x,z)|=1\}$ and $\nu_1(U)=|S(U)|$. Its lifts correspond to the $3^{\nu_1(U)}$ digit assignments on $S(U)$.

\begin{definition}\label{def:twist}
For a lift $a$ of $U_a$ define
\[
 F_a(x,z)=\zeta_9^x\sum_{y\in\Z_3}a(x+3y,z)\zeta_3^y
 =\begin{cases}
 -2U_a(x,z)\zeta_9^{x+3\rho_a(x,z)},&|U_a(x,z)|=1,\\
 0,&|U_a(x,z)|=3.
 \end{cases}
\]
\end{definition}
The displayed formula follows by summing the three cube roots of unity. Direct expansion gives
\begin{equation}\label{eq:channel99}
 \widehat F_a(u,t)=\widehat a(1+3u,t),\qquad (u,t)\in\Z_3\times\Z_{11}.
\end{equation}
The function $F_a$ is one of the two nontrivial subgroup channels; the other may be taken as its conjugate.

The order-three criterion now gives an exact test at length $99$.
The compressed pair supplies the retained correlations, while the digit
channel tests all remaining conditions on the local sign choices.

\begin{theorem}\label{thm:twolevel}
Let $a,b\colon\Z_{99}\to\{\pm1\}$ have sums $13$ and $15$. They form a D-optimal pair if and only if
\begin{enumerate}
\item[\textup{(i)}] $\PAF_{U_a}(k)+\PAF_{U_b}(k)=6$ for every nonzero $k\in\Z_3\times\Z_{11}$;
\item[\textup{(ii)}] $\PAF_{F_a}(k)+\PAF_{F_b}(k)=0$ for every nonzero $k\in\Z_3\times\Z_{11}$.
\end{enumerate}
In that case the combined energies at zero are $202$ and $196$, respectively.
\end{theorem}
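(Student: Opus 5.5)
The plan is to read Theorem~\ref{thm:twolevel} as the specialisation of Corollary~\ref{thm:subgroup_three}, and hence of Theorem~\ref{thm:channels}, to $G=\Z_9\times\Z_{11}$ with $K=3\Z_9\times\{0\}$, so that $m=3$, $d=33$ and $Q=G/K\cong\Z_3\times\Z_{11}$. First I check the standing hypothesis of Theorem~\ref{thm:channels}. Sums $13$ and $15$ give $13^2+15^2=394=4\cdot99-2$, so the criterion applies. Its compressed condition \eqref{eq:compressed_channel} reads $\PAF_{U_a}(q)+\PAF_{U_b}(q)=2m=6$ for $q\ne0$, which is exactly (i). The identification of $Q$ with the index set of $U_a(x,z)$ is the one fixed in the coordinates $j=x+3y$: the coset of $(x,z)$ is $\{(x+3y,z):y\in\Z_3\}$.

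Next I identify $F_a$ from Definition~\ref{def:twist} with a channel $F_{a,\rho}$ of \eqref{eq:channels}. Take $\eta=\chi_{1,0}$, that is $\eta(j,z)=\zeta_9^{j}$, which restricts to the nontrivial character $3y\mapsto\zeta_3^y$ of $K$. Then $\sum_y a(x+3y,z)\zeta_9^{x+3y}=\zeta_9^x\sum_y a(x+3y,z)\zeta_3^y=F_a(x,z)$, matching \eqref{eq:explicit_channels} with $p=3$, $k=2$ and $r=1$. The conjugate channel is $\overline{F_a}$, which corresponds to $\eta_{\bar\rho}=\overline{\eta}$. By the last clause of Theorem~\ref{thm:channels}, condition \eqref{eq:other_channels} therefore reduces to the single identity $\PAF_{F_a}+\PAF_{F_b}=(2v-2)\delta_0=196\,\delta_0$. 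At nonzero $k$ this is (ii). This yields the equivalence, provided the value at $0$ is automatic under (ii) or else is supplied in the forward direction.

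Handling that zero value is the step needing care, since (ii) as stated only concerns $k\ne0$. In the forward direction the theorem gives the value $196$ directly, and the compression energy $2v+2(m-1)=202$ comes from Theorem~\ref{thm:channels}. For the converse I will compute the energy of $F_a$ from the fibre structure rather than assume it. By Definition~\ref{def:twist}, $|F_a(x,z)|^2=4$ on fibres with $|U_a|=1$ and $0$ otherwise, so $\PAF_{F_a}(0)+\PAF_{F_b}(0)=4\nu_1$. Under (i) and the sum condition, Corollary~\ref{thm:subgroup_three} gives $\nu_1=(v-1)/2=49$, so this equals $196=2v-2$, exactly as required. That corollary also confirms that the compressed entries lie in $\{\pm1,\pm3\}$, which holds for any binary $a$ because fibres have three entries.

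The combined energy of the compressions is then $202$, which follows from the converse computation in Theorem~\ref{thm:channels}: $4v-2-2m(d-1)=394-192$. Together with (i), (ii) and the zero value $196$, all hypotheses of Theorem~\ref{thm:channels} hold, so $(a,b)$ is D-optimal. The only genuine obstacle is therefore the zero-shift bookkeeping for the twisted channel, which the count $\nu_1=49$ resolves.
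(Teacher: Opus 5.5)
Your proposal is correct and is essentially the paper's argument. The paper also gets the compressed energy $202$ from (i) and the sums, deduces $49$ entries of absolute value one and hence the zero-shift value $4\cdot49=196$ for the twisted channel, and then applies the order-three channel criterion. The one difference is that the paper cites \eqref{eq:channel99} to identify $F_a$ as the nontrivial channel, whereas you check this directly with the extension $(j,z)\mapsto\zeta_9^{j}$.
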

\begin{proof}
Condition (i) and the sums give compressed energy $394-32\cdot6=202$. Since the compressed entries are $\pm1$ or $\pm3$, there are $49$ entries of absolute value one and $17$ of absolute value three. Thus the combined energy of $F_a,F_b$ is $4\cdot49=196$. The result now follows from the order-three channel criterion and \eqref{eq:channel99}.
\end{proof}

\subsection{Lift classes}
Fix a row $x$ of $U_a$. Let $p_x$ and $m_x$ be the numbers of entries $+1$ and $-1$ in that row, define the \emph{signed digit sums}
\[
g_{x,y}=\sum_{z:\ |U_a(x,z)|=1,\ \rho_a(x,z)=y}U_a(x,z)\qquad(y\in\Z_3),
\]
and put $c_x=\sum_{|U_a(x,z)|=3}U_a(x,z)/3+\sum_{|U_a(x,z)|=1}U_a(x,z)$.

A \emph{row digit group} consists of the assignments with the same signed
digit sums. The following formula counts each group and determines its
contribution to the length-nine compression.

\begin{lemma}\label{lem:digitsums}
For every lift, $A_9(x+3y)=c_x-2g_{x,y}$. The number of digit assignments of row $x$ with given signed digit sums $g_x=(g_{x,0},g_{x,1},g_{x,2})$ is the coefficient of $w_0^{g_{x,0}}w_1^{g_{x,1}}w_2^{g_{x,2}}$ in $(w_0+w_1+w_2)^{p_x}(w_0^{-1}+w_1^{-1}+w_2^{-1})^{m_x}$, where $w_0,w_1,w_2$ are formal variables.
\end{lemma}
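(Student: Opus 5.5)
The plan is to establish the first identity fibre by fibre. Then, for the count, encode each fibre's digit choice as a monomial and read off the coefficient.

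\emph{Step 1: the identity for $A_9$.} Fix $x$ and $y$. Since $j=x+3y$ ranges over the residues with a fixed value of $x$, we have
\[
A_9(x+3y)=\sum_{z\in\Z_{11}}a(x+3y,z),
\]
and we split the sum over $z$ by the type of the fibre $(x,z)$.
\begin{itemize}
\item If $|U_a(x,z)|=3$, all three entries equal $U_a(x,z)/3$, so the contribution is $U_a(x,z)/3$.
\item If $|U_a(x,z)|=1$ and $\rho_a(x,z)\ne y$, the entry at position $y$ carries the majority sign $U_a(x,z)$.
\item If $|U_a(x,z)|=1$ and $\rho_a(x,z)=y$, the entry is the minority sign $-U_a(x,z)=U_a(x,z)-2U_a(x,z)$.
\end{itemize}
Summing, $A_9(x+3y)$ equals the sum of $U_a(x,z)/3$ over homogeneous fibres, plus the sum of $U_a(x,z)$ over nonhomogeneous fibres, minus twice the sum of $U_a(x,z)$ over those nonhomogeneous fibres with $\rho_a(x,z)=y$. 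The first two terms are exactly $c_x$, and the last is $2g_{x,y}$ by definition. This gives $A_9(x+3y)=c_x-2g_{x,y}$. Nothing here is delicate.

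\emph{Step 2: the count.} The digit assignments of row $x$ are the independent choices of $\rho_a(x,z)\in\Z_3$ for each $z$ with $|U_a(x,z)|=1$. Here I read $p_x$ and $m_x$ as counting the entries $+1$ and $-1$ of absolute value one, as the definition intends. Attach to the choice $\rho_a(x,z)=y$ the monomial $w_y^{U_a(x,z)}$: this is $w_y$ when $U_a(x,z)=+1$ and $w_y^{-1}$ when $U_a(x,z)=-1$.

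The product of these monomials over the nonhomogeneous fibres of the row is $w_0^{g_{x,0}}w_1^{g_{x,1}}w_2^{g_{x,2}}$, by the definition of the signed digit sums. Summing over all independent choices factorises as a product over fibres of $(w_0+w_1+w_2)$ or $(w_0^{-1}+w_1^{-1}+w_2^{-1})$, according to the sign. This gives
\[
(w_0+w_1+w_2)^{p_x}(w_0^{-1}+w_1^{-1}+w_2^{-1})^{m_x}
\]
as a Laurent polynomial. The coefficient of a given monomial therefore counts the assignments with that triple of signed digit sums.

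The only point needing care is that distinct assignments are counted separately and not merged. This holds because the expansion of the product enumerates choice sequences indexed by the fibres $z$, so each digit assignment corresponds to exactly one term. The main obstacle is therefore only bookkeeping: matching the definition of $c_x$, and confirming that homogeneous fibres contribute neither to the $g_{x,y}$ nor to the generating function.
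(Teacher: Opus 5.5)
Your proposal is correct and follows essentially the same route as the paper. Both arguments split row $x+3y$ into the fibre contributions $U_a/3$, $\sigma$ or $-\sigma$ to obtain $A_9(x+3y)=c_x-2g_{x,y}$, and both count assignments by choosing each nonhomogeneous fibre's digit independently; your monomial $w_y^{U_a(x,z)}$ bookkeeping just makes the paper's one-line generating-function step explicit.
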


\begin{proof}
Row $x+3y$ of the array receives $U_a/3$ from every entry $\pm3$ of row $x$ of $U_a$, and $\sigma$ or $-\sigma$ from every entry $\sigma=\pm1$, according as its digit differs from $y$ or equals $y$. The count follows by choosing the digit of each entry independently.
\end{proof}

Thus the 9-compression of a lift depends only on one triple of signed digit sums per row, and these take few values ($108$, $61$ and $46$ in the rows of $U_a$ for the design of Section~\ref{sec:designs}).

The missing conditions with $t=0$ depend only on the length-nine
compressions. The next proposition encodes them in three integers, called
a \emph{spectral key}. Matching these keys avoids numerical equality
tests and discards incompatible classes before full lifts are generated.

\begin{proposition}\label{prop:key}
For an integer sequence $A$ on $\Z_9$ let $P_k=\PAF_A(k)$ and
\[
\kappa(A)=(P_0-P_3,\;P_1-P_4,\;P_2-P_4)\in\Z^3 .
\]
For integer sequences $A$ and $B$ on $\Z_9$ and an integer $\beta$, $\PSD_A(s)+\PSD_B(s)=\beta$ for every $s$ of order $9$ if and only if $\kappa(A)+\kappa(B)=(\beta,0,0)$.
\end{proposition}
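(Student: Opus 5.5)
Write $P^A_k=\PAF_A(k)$ and $P^B_k=\PAF_B(k)$. The plan is to treat the combined spectral condition as the vanishing of a single element of the cyclotomic field $\Q(\zeta_9)$. By definition $\PSD_A(\chi_s)=\sum_{k\in\Z_9}P^A_k\zeta_9^{sk}$, and the same holds for $B$. Put
\[
T(z)=\sum_{k=0}^{8}\bigl(P^A_k+P^B_k\bigr)z^k-\beta\in\Z[z].
\]
Then the condition in the statement says exactly that $T(\zeta_9^s)=0$ for every $s$ coprime to $9$. Since $T$ has rational (indeed integer) coefficients, the Galois automorphisms $\zeta_9\mapsto\zeta_9^s$ permute these six values. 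Hence the condition is equivalent to the single equation $T(\zeta_9)=0$, that is, to $\Phi_9(z)=z^6+z^3+1$ dividing $T$.

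Next I would use the basis $1,\zeta,\dots,\zeta^5$ of $\Q(\zeta_9)$ over $\Q$, writing $\zeta=\zeta_9$. Reducing the higher powers with $\zeta^6=-\zeta^3-1$, $\zeta^7=-\zeta^4-\zeta$ and $\zeta^8=-\zeta^5-\zeta^2$ gives
\[
\sum_k P_k\zeta^k=(P_0-P_6)+(P_1-P_7)\zeta+(P_2-P_8)\zeta^2+(P_3-P_6)\zeta^3+(P_4-P_7)\zeta^4+(P_5-P_8)\zeta^5 .
\]
For real (here integer) sequences the autocorrelation is symmetric, $P_k=P_{-k}$, so $P_6=P_3$, $P_7=P_2$, $P_8=P_1$ and $P_5=P_4$. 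The expansion therefore becomes
\[
(P_0-P_3)+(P_1-P_2)\zeta-(P_1-P_2)\zeta^2+(P_4-P_2)\zeta^4+(P_4-P_1)\zeta^5 .
\]
Every coordinate is an integer combination of the three entries of $\kappa=(P_0-P_3,\,P_1-P_4,\,P_2-P_4)$. Conversely, the coordinates determine $\kappa$: $P_1-P_4$ and $P_2-P_4$ are the negatives of the $\zeta^5$ and $\zeta^4$ coordinates.

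I would then apply this expansion to the combined sequence $P^A+P^B$ and subtract $\beta$, which affects only the constant coordinate. By linear independence of the basis, $T(\zeta)=0$ holds if and only if all coordinates vanish. These conditions read
\[
\kappa_1(A)+\kappa_1(B)=\beta,\qquad \kappa_2(A)+\kappa_2(B)=\kappa_3(A)+\kappa_3(B)=0 .
\]
The $\zeta$ and $\zeta^2$ coordinates vanish automatically once the last two equations hold, since $P_1-P_2=\kappa_2-\kappa_3$. This is exactly $\kappa(A)+\kappa(B)=(\beta,0,0)$.

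The only delicate point is the bookkeeping in reducing $\zeta^6,\zeta^7,\zeta^8$ and applying the symmetry correctly. Once that is done, both directions follow at once from linear independence and the Galois argument. No step needs estimates, and integrality of $A$ and $B$ is used only for the rationality of $T$ and for $\kappa$ being an integer vector.
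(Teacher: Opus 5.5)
Your proof is correct. It differs from the paper's only in the choice of field and basis.

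The paper first uses $P_k=P_{-k}$ to write $\PSD_A(1)=\sum_k P_k\gamma_k$ with $\gamma_k=\zeta_9^k+\zeta_9^{-k}$. It then eliminates $\gamma_3=-1$ and $\gamma_4=-\gamma_1-\gamma_2$. Finally it uses the $\Q$-basis $1,\gamma_1,\gamma_2$ of the real subfield $\Q(\zeta_9+\zeta_9^{-1})$, which requires checking that $X^3-3X+1$ is irreducible and that $\gamma_2=\gamma_1^2-2$.

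You instead reduce modulo $\Phi_9=z^6+z^3+1$ in the full field $\Q(\zeta_9)$, using the power basis $1,\zeta_9,\ldots,\zeta_9^5$. This is exactly the $m=9$ case of the remainder test in Proposition~\ref{prop:cyclotomic_key}. Your reductions, the symmetry substitutions and the resulting five nonzero coordinates all check out.

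What each route buys:
\begin{itemize}
\item Your route needs only the standard irreducibility of $\Phi_9$ and a mechanical reduction. It also makes explicit the paper's closing remark in Proposition~\ref{prop:cyclotomic_key} that the $\Phi_9$-remainder key and $\kappa$ carry the same information.
\item The price is six coordinates instead of three. The $\zeta_9^3$ coordinate vanishes identically, and the $\zeta_9,\zeta_9^2$ coordinates equal $\pm(P_1-P_2)$. You must notice this redundancy to get down to three integers.
\item The real-subfield basis produces the three-integer key directly. It also explains the key length as $\varphi(9)/2$.
\end{itemize}

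One cosmetic point: the paper's definitions literally give $\PSD_A(\chi_s)=\sum_k P_k\zeta_9^{-sk}$. This equals your expression by the symmetry you invoke anyway, so nothing changes.
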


\begin{proof}
Let $\gamma_k=\zeta_9^k+\zeta_9^{-k}$. As $P_k=P_{-k}$, $\PSD_A(1)=P_0+P_1\gamma_1+P_2\gamma_2+P_3\gamma_3+P_4\gamma_4$. Here $\gamma_3=-1$, and $\gamma_1+\gamma_2+\gamma_4=0$ by the identity $1+\zeta_3+\zeta_3^2=0$. Hence
\[
\PSD_A(1)=(P_0-P_3)+(P_1-P_4)\,\gamma_1+(P_2-P_4)\,\gamma_2 .
\]
The number $\gamma_1=2\cos(2\pi/9)$ satisfies the irreducible polynomial $X^3-3X+1$ over $\Q$ (its only possible rational roots, $\pm1$, are not roots). Since $\gamma_2=\gamma_1^2-2$, the numbers $1,\gamma_1,\gamma_2$ are a basis of $\Q(\zeta_9+\zeta_9^{-1})$. So $\PSD_A(1)+\PSD_B(1)=\beta$ if and only if $\kappa(A)+\kappa(B)=(\beta,0,0)$. The values at the other $s$ of order $9$ are Galois conjugates of the value at $s=1$, and $\beta$ is rational.
\end{proof}

The standard minimal-polynomial test gives the same type of key at other
compression lengths. The following formulation explains how to obtain an
exact integer test when the three-coordinate form above is unavailable.

\begin{proposition}\label{prop:cyclotomic_key}
Let $A,B$ be integer sequences on $\Z_m$, where $m>1$, and put
\[
 R_A(z)=\sum_{k=0}^{m-1}\PAF_A(k)z^k.
\]
Define $R_B$ similarly. Let $K_m(A)$ be the coefficient vector of the remainder of $R_A$ modulo the cyclotomic polynomial $\Phi_m$, whose roots are the primitive $m$th roots of unity. Order the coefficients by increasing powers and pad with zeros to length $\varphi(m)$, where $\varphi$ is Euler's totient function. For an integer $\beta$,
\[
\begin{aligned}
 &\PSD_A(s)+\PSD_B(s)=\beta\quad\text{for every character of order }m\\
 &\hspace{25mm}\Longleftrightarrow\quad K_m(A)+K_m(B)=(\beta,0,\ldots,0).
\end{aligned}
\]
For $m>2$ the same test can be represented in a rational basis of the real cyclotomic field $\Q(\zeta_m+\zeta_m^{-1})$, using $\varphi(m)/2$ coordinates; a fixed common denominator makes these integer keys. At $m=9$ it is the three-coordinate key of Proposition~\ref{prop:key}.
\end{proposition}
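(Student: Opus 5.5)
The approach is to identify the PSD at a character of order $m$ with the evaluation of $R_A$ at a primitive $m$th root of unity. Then I will use that $\Phi_m$ is the minimal polynomial of $\zeta_m$ over $\Q$, and apply the Galois action to pass from one character to all of them.

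\textbf{Step 1: PSD as an evaluation of $R_A$.} For $\chi_s(j)=\zeta_m^{sj}$ and real $A$, expanding $|\hat A(s)|^2=\sum_{j,l}A(j)A(l)\zeta_m^{s(j-l)}$ and collecting by difference gives $\PSD_A(s)=\sum_k \PAF_A(k)\zeta_m^{sk}=R_A(\zeta_m^s)$. A character has order $m$ exactly when $\gcd(s,m)=1$, that is, when $\zeta_m^s$ runs over the roots of $\Phi_m$.

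\textbf{Step 2: reduce to one character.} Let $D=R_A+R_B-\beta\in\Z[z]$. The left-hand condition says that $D$ vanishes at every primitive $m$th root of unity. Since $\Phi_m$ is irreducible over $\Q$, this holds if and only if $D$ vanishes at $\zeta_m$ alone: the elements $\sigma_s\colon\zeta_m\mapsto\zeta_m^s$ of $\mathrm{Gal}(\Q(\zeta_m)/\Q)$ fix the integer coefficients of $D$ and permute the roots transitively.

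\textbf{Step 3: pass to remainders.} $D(\zeta_m)=0$ holds if and only if $\Phi_m\mid D$ in $\Q[z]$, which holds if and only if the remainder of $D$ modulo $\Phi_m$ is zero. Division by a monic polynomial is $\Z$-linear on remainders and keeps integer coefficients. The remainder of the constant $\beta$ is $\beta$ itself when $\deg\Phi_m=\varphi(m)\ge1$. Hence the remainder of $D$ is $K_m(A)+K_m(B)-(\beta,0,\dots,0)$. Equivalently, $1,\zeta_m,\dots,\zeta_m^{\varphi(m)-1}$ is a $\Q$-basis of $\Q(\zeta_m)$, so the coordinates must match. The case $m=2$, with $\Phi_2=z+1$ and a single coordinate, is covered by the same argument.

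\textbf{Step 4: the real-subfield version.} For $m>2$, $\PAF_A(k)=\PAF_A(-k)$, so $R_A(\zeta_m)=\PAF_A(0)+\sum_{0<k<m/2}\PAF_A(k)(\zeta_m^k+\zeta_m^{-k})$, plus a real term at $k=m/2$ if $m$ is even. This lies in $\Q(\zeta_m+\zeta_m^{-1})$, which has degree $\varphi(m)/2$. I will fix a rational basis, for instance the powers of $\gamma=\zeta_m+\zeta_m^{-1}$ up to $\gamma^{\varphi(m)/2-1}$. Each $\zeta_m^k+\zeta_m^{-k}$ has rational coordinates in this basis; indeed they are integral via Chebyshev-type expressions in the power basis, and at worst they share a common denominator. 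Scaling by that fixed denominator gives integer keys, and the equivalence follows as in Step 3 by uniqueness of coordinates, with $\beta$ contributing only to the coordinate of $1$ and scaled by the same denominator.

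\textbf{Step 5: the case $m=9$.} Here $\Phi_9=z^6+z^3+1$, and the real basis $1,\gamma_1,\gamma_2$ reproduces $\kappa$. This is exactly the computation in Proposition~\ref{prop:key}.

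The only delicate step is the real-basis representation: one must state that the key depends on the chosen basis and denominator but that any fixed choice works. Uniqueness of coordinates over $\Q$ is the whole argument; there is no real obstacle beyond bookkeeping.
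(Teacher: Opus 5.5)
Your proposal is correct and follows essentially the same route as the paper. Both arguments identify $\PSD_A(1)$ with $R_A(\zeta_m)$, use that $\Phi_m$ is the monic minimal polynomial of $\zeta_m$ so that vanishing of $R_A+R_B-\beta$ at $\zeta_m$ is equivalent to a zero remainder, pass to the other characters of order $m$ by Galois conjugation, and use autocorrelation symmetry to place the values in the real subfield before clearing denominators; you additionally spell out details the paper leaves implicit, namely $\PSD_A(s)=R_A(\zeta_m^s)$, the remainder of the constant $\beta$, and the integrality of the coordinates of $\zeta_m^k+\zeta_m^{-k}$ in the power basis of $\gamma$.
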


\begin{proof}
At $\zeta=\zeta_m$, $R_A(\zeta)=\PSD_A(1)$. Its minimal polynomial is the monic polynomial $\Phi_m\in\Z[z]$, so $R_A+R_B-\beta$ vanishes at $\zeta$ if and only if its remainder modulo $\Phi_m$ is zero. The other primitive roots are its Galois conjugates. Autocorrelation symmetry makes the values real; for $m>2$ they lie in the degree-$\varphi(m)/2$ real subfield. Clearing the fixed basis denominators gives the integer form.
\end{proof}

Applied to $A_9$ and $B_9$, Proposition~\ref{prop:key} is the part of condition (ii) at $t=0$, while the characters of order $3$ with $t=0$ are already covered by (i). We call $\kappa(A_9)$ the \emph{key} of a lift of $U_a$.

\begin{definition}\label{def:class}
Fix $(U_a,U_b)$ satisfying Theorem~\ref{thm:twolevel}\,\textup{(i)} with sums $13,15$. Put $T=(196,0,0)$. The class indexed by $\kappa\in\Z^3$ consists of pairs whose $a$-lift has key $\kappa$ and whose $b$-lift has key $T-\kappa$. Write $N_A(\kappa)$ and $N_B(\kappa)$ for the respective numbers of lifts. A class is \emph{matched} if both counts are positive.
\end{definition}

Triples of row digit groups partition all lifts. Their multiplicities
therefore give exact class sizes without generating the sequences. These
counts identify compatible classes and measure the candidate volume left
by the key, providing the data needed to choose manageable searches.

\begin{proposition}\label{prop:classcounts}
For side $S\in\{A,B\}$, let $N_{S,x}(g_x)$ denote the coefficient in Lemma~\ref{lem:digitsums}, computed from that side's row, and let $c_{S,x}$ be its corresponding row constant. For each triple of row groups $\boldsymbol g=(g_0,g_1,g_2)$, define $C_S(\boldsymbol g)(x+3y)=c_{S,x}-2g_{x,y}$. Then
\begin{align*}
 N_A(\kappa)&=\sum_{\boldsymbol g:\,\kappa(C_A(\boldsymbol g))=\kappa}
                 \prod_{x=0}^2 N_{A,x}(g_x),\\
 N_B(\kappa)&=\sum_{\boldsymbol g:\,\kappa(C_B(\boldsymbol g))=T-\kappa}
                 \prod_{x=0}^2 N_{B,x}(g_x).
\end{align*}
The sums include all nonempty digit groups. Every D-optimal lift pair lies in a matched class, and the number of pairs satisfying the order-nine key condition is exactly $\sum_\kappa N_A(\kappa)N_B(\kappa)$.
\end{proposition}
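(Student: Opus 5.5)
The plan is to organise everything as a partition of the set of lifts of $U_a$ (respectively $U_b$) by the triple of row digit groups, and then sum over that partition.

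\textbf{Step 1 (partition).} A lift of $U_a$ is the same as a digit assignment on $S(U_a)$. The rows $x=0,1,2$ of $U_a$ occupy disjoint sets of positions, so such an assignment is an independent triple of row assignments. Sending a lift to its triple of signed digit sums $\boldsymbol g=(g_0,g_1,g_2)$ therefore partitions the lifts into blocks indexed by those $\boldsymbol g$ for which every row group is nonempty.

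\textbf{Step 2 (block sizes and keys).} The size of the block indexed by $\boldsymbol g$ is $\prod_x N_{A,x}(g_x)$, because the rows are chosen independently and Lemma~\ref{lem:digitsums} counts the assignments of each row with the prescribed signed digit sums. The same lemma gives $A_9(x+3y)=c_x-2g_{x,y}$ for every lift, so every lift in the block has $A_9=C_A(\boldsymbol g)$. Consequently its key $\kappa(A_9)=\kappa(C_A(\boldsymbol g))$ is constant on the block.

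\textbf{Step 3 (class counts).} Summing block sizes over those $\boldsymbol g$ with $\kappa(C_A(\boldsymbol g))=\kappa$ gives $N_A(\kappa)$. For side $B$, Definition~\ref{def:class} counts $b$-lifts whose key is $T-\kappa$, and the same summation with $C_B$ gives $N_B(\kappa)$. Empty groups contribute zero in any case, so including all nonempty groups loses nothing.

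\textbf{Step 4 (D-optimal lifts lie in matched classes).} Let $(a,b)$ be a D-optimal lift pair. By Theorem~\ref{thm:twolevel}(ii) and equation~\eqref{eq:channel99}, together with Lemma~\ref{lem:fourier} and the energy $196$, we get $\PSD_a(\chi)+\PSD_b(\chi)=198$ at every nontrivial character. Specialise to characters with $t=0$ and $s$ of order $9$. These characters depend only on $A_9$ and $B_9$ by Lemma~\ref{lem:split}, and Theorem~\ref{thm:compress} gives $\PSD_{A_9}(s)+\PSD_{B_9}(s)=198$ for every such $s$.

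Proposition~\ref{prop:key} with $\beta=198$ then gives $\kappa(A_9)+\kappa(B_9)=(198,0,0)$. This does not match $T=(196,0,0)$, so the main checkpoint is the value of $\beta$. I would first recheck it through the channel: at $t=0$, Theorem~\ref{thm:twolevel}(ii) says $\PSD_{F_a}+\PSD_{F_b}=196$, the channel energy. The relation $\widehat F_a(u,0)=\hat a(1+3u,0)$ ties this directly to $\PSD_{A_9}$ at order-$9$ characters, which would give $198$. Before finalising I would reconcile the normalisation of $T$ against this computation, since $T$ is fixed in Definition~\ref{def:class} and the statement asserts it. Whichever $\beta$ the check confirms is the one that defines the matched condition. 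Given that, $(a,b)$ lies in the class indexed by $\kappa=\kappa(A_9)$, both counts are positive, and the class is matched.

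\textbf{Step 5 (total count).} The order-nine key condition on a pair is exactly $\kappa(A_9)+\kappa(B_9)=T$. Pairs satisfying it are therefore the disjoint union over $\kappa$ of (lifts $a$ with key $\kappa$) $\times$ (lifts $b$ with key $T-\kappa$). This union has cardinality $\sum_\kappa N_A(\kappa)N_B(\kappa)$.

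The only real obstacle is Step 4: everything else is bookkeeping that follows from Lemma~\ref{lem:digitsums}, whereas Step 4 requires the constant $\beta$ in the key to be confirmed against $T$.
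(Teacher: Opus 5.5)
Steps 1--3 and 5 are the paper's own argument. The three rows' digit assignments are independent, and the triples of row digit groups partition the lifts. Lemma~\ref{lem:digitsums} fixes the length-nine compression, and hence the key, on each block, and gives the block size as a product. Summing over keys gives both counts and the total $\sum_\kappa N_A(\kappa)N_B(\kappa)$.

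The gap is in Step 4, the only non-bookkeeping assertion, which you leave unresolved because of an arithmetic error. A D-optimal pair is complementary with constants $(P,c)=(2v,2)=(198,2)$. Lemma~\ref{lem:fourier} therefore gives combined PSD $P-c=196$ at every nontrivial character, not $198$. Equivalently, the length-nine compressions have constants $(218,22)$ by \eqref{eq:const}, and $218-22=196$. Your channel recheck gives the same value. Theorem~\ref{thm:twolevel}(ii) with energy $196$ yields $\PSD_{F_a}+\PSD_{F_b}=196$ at every character of $\Z_3\times\Z_{11}$. By \eqref{eq:channel99} and Theorem~\ref{thm:compress}, $\PSD_{F_a}(u,0)=\PSD_a(\chi_{1+3u,0})=\PSD_{A_9}(1+3u)$, with no shift by $2$. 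The remaining $s$ of order nine follow by the symmetry $\PSD(-s)=\PSD(s)$. So $\beta=196$, and Proposition~\ref{prop:key} gives $\kappa(A_9)+\kappa(B_9)=(196,0,0)=T$. This places $(a,b)$ in the class indexed by $\kappa(A_9)$, with $N_A(\kappa),N_B(\kappa)\ge1$. The paper's explicit pair confirms this value: $(148,24,12)+(48,-24,-12)=(196,0,0)$.

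As written, Step 4 does not prove that D-optimal pairs lie in matched classes. If $\beta$ really were $198$, a D-optimal pair would satisfy $\kappa(A_9)+\kappa(B_9)\ne T$ and lie in no class of Definition~\ref{def:class}, so the statement would be false. Deferring to ``whichever $\beta$ the check confirms'' leaves exactly the substantive claim open. Once you substitute $\beta=196$, your proof coincides with the paper's.
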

\begin{proof}
The row choices are independent, so their counts multiply. Distinct triples of row groups partition all full lifts. Their length-nine compressions, and hence their keys, are determined by Lemma~\ref{lem:digitsums}. Summing over the appropriate keys gives the stated counts, and Proposition~\ref{prop:key} gives the necessary matching condition.
\end{proof}

Nothing in this counting step requires eleven columns. The next statement
shows how the procedure transfers to another odd abelian factor and bounds
the amount of class-counting work.

\begin{proposition}\label{prop:general9}
Let $H$ be a finite abelian group of odd order $w>1$.  On
$\Z_3\times H$, let $U_a,U_b$ take values in $\{\pm1,\pm3\}$, have
combined autocorrelation six off zero, and satisfy
$(\sum U_a)^2+(\sum U_b)^2=36w-2$.
The digit parametrisation, exact class-count formulas and order-nine
key extend to lifts on $\Z_9\times H$, with
\[
 T=(18w-2,0,0),\qquad
 \nu_1=\frac{9w-1}{2},\qquad \nu_3=\frac{3w+1}{2}.
\]
There are $3^{(9w-1)/2}$ lift pairs before matching.  Nevertheless, the
exact class counts can be aggregated from at most $(w+1)^6$ triples of
row digit groups on each side, without enumerating these full lifts.
Every D-optimal lift pair belongs to a complementary key class.
\end{proposition}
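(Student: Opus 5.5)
Set $v=9w$, $G=\Z_9\times H$, and compress along $K=3\Z_9\times\{0\}$, so that $Q=\Z_3\times H$ and $d=3w$. The plan is to re-run the length-$99$ argument with $\Z_{11}$ replaced by $H$, checking at each step that only $|H|=w$ and oddness are used.

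\textbf{Numerical data.} By Theorem~\ref{thm:compress}, the trivial character gives $(\sum U_a)^2+(\sum U_b)^2=4v-2=36w-2$, which is the stated hypothesis. Corollary~\ref{thm:subgroup_three} with $v=9w$ then yields $\nu_1=(9w-1)/2$ and $\nu_3=(9w+3)/6=(3w+1)/2$. Each $\pm1$ fibre has three minority positions, which gives $3^{(9w-1)/2}$ lift pairs.

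\textbf{Digit parametrisation.} For a fibre with $|U(x,h)|=1$, the digit $\rho_a(x,h)\in\Z_3$ records the position of the minority sign. Definition~\ref{def:twist} and \eqref{eq:explicit_channels} with $p=3$, $k=2$ give $F_a(x,h)=-2U_a(x,h)\zeta_9^{x+3\rho_a(x,h)}$ and $\widehat F_a(u,\psi)=\hat a(1+3u,\psi)$. By Theorem~\ref{thm:channels}, D-optimality is equivalent to (i) on $Q$ together with $\PAF_{F_a}+\PAF_{F_b}=(2v-2)\delta_0$. Since $\widehat F_a$ is evaluated at $\psi=1$, the characters of $G$ that are trivial on $H$ and of order $9$ are the order-nine characters of the length-nine compressions $A_9(j)=\sum_h a(j,h)$. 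So every D-optimal pair satisfies $\PSD_{A_9}(s)+\PSD_{B_9}(s)=2v-2=18w-2$ for $s$ of order $9$.

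\textbf{Keys and class counts.} Proposition~\ref{prop:key} with $\beta=18w-2$ gives the complementary key condition with $T=(18w-2,0,0)$. Lemma~\ref{lem:digitsums} transfers verbatim, with $z$ ranging over $H$, so $A_9(x+3y)=c_x-2g_{x,y}$, and the row counts are the generating-function coefficients. The counting argument of Proposition~\ref{prop:classcounts} is unchanged: rows are independent, and triples of row groups partition the lifts. Hence the class-count formulas hold, and every D-optimal pair lies in a matched class.

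\textbf{Bound on row groups.} This is the only new step and the main point to get right. In row $x$, the triple $g_x$ is determined by the numbers $(n^+_{x,y},n^-_{x,y})_{y\in\Z_3}$ of $+1$ and $-1$ entries assigned digit $y$, through $g_{x,y}=n^+_{x,y}-n^-_{x,y}$. These satisfy $\sum_y n^+_{x,y}=p_x$ and $\sum_y n^-_{x,y}=m_x$. Moreover, $g_x$ is determined by $(g_{x,0},g_{x,1})$, because $g_{x,2}=p_x-m_x-g_{x,0}-g_{x,1}$. Each $g_{x,y}$ lies in $[-m_x,p_x]$ with fixed parity constraints, and $p_x+m_x\le w$.

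To bound the count, note that $g_x$ is determined by the pair $(n^+_{x,0}+n^-_{x,1},\,n^+_{x,1}+n^-_{x,0})$. That is not a bijection, so instead count directly. The number of distinct values of $g_{x,0}$ is at most $p_x+m_x+1\le w+1$, and likewise for $g_{x,1}$. This gives at most $(w+1)^2$ row groups per row, and therefore at most $(w+1)^6$ triples per side.

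Each triple determines $C_S(\boldsymbol g)$ and hence its key in constant work, so the class counts are aggregated without enumerating full lifts. The only care needed is this per-row bound of $(w+1)^2$; I would verify it via the range and parity of $g_{x,0}$ and $g_{x,1}$ as above.
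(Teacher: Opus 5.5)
Your proposal is correct and follows essentially the same route as the paper. It takes the fibre counts from Corollary~\ref{thm:subgroup_three}, transfers Lemma~\ref{lem:digitsums} and Proposition~\ref{prop:classcounts} verbatim with the target $T=(18w-2,0,0)$ from Proposition~\ref{prop:key}, and gets the per-row bound $(w+1)^2$ from $g_{x,y}\in[-m_x,p_x]$ together with $g_{x,0}+g_{x,1}+g_{x,2}=p_x-m_x$. Your aside that $g_x$ is determined by $(n^+_{x,0}+n^-_{x,1},\,n^+_{x,1}+n^-_{x,0})$ is false, since that pair only fixes $g_{x,0}-g_{x,1}$, but you never use it, so simply delete it.
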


\begin{proof}
Corollary~\ref{thm:subgroup_three} gives the fibre counts and the number
of lift pairs.  In each row, sum over $H$ instead of $\Z_{11}$.
Lemma~\ref{lem:digitsums} still gives
$A_9(x+3y)=c_x-2g_{x,y}$ and the same generating-function coefficients.
The required PSD sum is now $18w-2$, so Proposition~\ref{prop:key}
gives the stated target.  The independent row choices give exactly
the class-count formulas of Proposition~\ref{prop:classcounts} with
this target.

For a row with $n=p_x+m_x\le w$ nonhomogeneous entries, every signed
digit sum lies in $[-m_x,p_x]$, an interval of $n+1$ integers, and
$g_{x,0}+g_{x,1}+g_{x,2}=p_x-m_x$.
Thus the first two coordinates determine the third, leaving at most
$(n+1)^2\le(w+1)^2$ nonempty groups.  Combining three rows gives
the bound $(w+1)^6$.
\end{proof}

The row coefficients themselves can be computed by successively
multiplying the three-term factors in Lemma~\ref{lem:digitsums} and
collecting equal monomials after each multiplication.
Thus this preprocessing uses polynomially many arithmetic operations
in $w$; it does not make enumeration of the full lifts polynomial.
The orbit reduction also extends: translations $(j,z)\mapsto(j+3,z)$
on either sequence and the common map $(j,z)\mapsto(4j,z)$ generate
$\Z_3^3$.  Each side has at least $(3w-1)/2>w$ nonhomogeneous fibres,
so its digits occupy two rows; the stabiliser argument of
Proposition~\ref{prop:sym} again makes this action free.
For $h\in H\setminus\{0\}$ the additive key is
\[
 \PAF_a(3,h)=\sum_{x=0}^2\sum_{y\in\Z_3}\sum_{z\in H}
 a(x+3y,z)a(x+3y+3,z+h).
\]
It separates by rows exactly as in Lemma~\ref{lem:additive}; matching
these necessary conditions is followed by the full autocorrelation test.

The counts require only row-group data, before any full-length lift is enumerated. They refer to the unreduced lists of lifts; orbit reductions are treated separately below. In the notation of Section~\ref{sec:search}, these are the counts with $\alpha=1$.

\subsection{Additive matching and orbit reduction}
The class key leaves further correlation equations to impose. Their rowwise
additivity provides an integer key for matching separately generated lifts.

\begin{lemma}\label{lem:additive}
For $t\in\Z_{11}$ let $k_t\in\Z_{99}$ be the element with $k_t\equiv3\pmod 9$ and $k_t\equiv t\pmod{11}$. Then $\PAF_a(k_t)=G_0(t)+G_1(t)+G_2(t)$, where
\[
G_x(t)=\sum_{y\in\Z_3}\sum_{z\in\Z_{11}}a(x+3y,z)\,a(x+3y+3,z+t)
\]
depends only on row $x$ of $U_a$ and its digits.
\end{lemma}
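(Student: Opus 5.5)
The plan is to verify the identity directly from the CRT coordinates and then read off the row dependence from the fibre structure. Under $i\mapsto(i\bmod 9,\,i\bmod 11)$, the shift by $k_t$ becomes $(j,z)\mapsto(j+3,z+t)$ on $\Z_9\times\Z_{11}$. So
\[
\PAF_a(k_t)=\sum_{j\in\Z_9}\sum_{z\in\Z_{11}}a(j,z)\,a(j+3,z+t).
\]
Next we write $j=x+3y$ with $x\in\{0,1,2\}$ and $y\in\Z_3$. Since $j+3=x+3(y+1)$, the shift preserves the residue $x=j\bmod 3$. The double sum therefore splits into three partial sums indexed by $x$, and these are precisely the $G_x(t)$ of the statement. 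This proves the additive identity.

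For the dependence claim, note that the terms of $G_x(t)$ involve only the values $a(x+3y',z')$ with $y'\in\Z_3$ and $z'\in\Z_{11}$. These are exactly the entries lying in the fibres $\{(x+3y,z):y\in\Z_3\}$ indexed by row $x$ of $U_a$. By the digit parametrisation preceding Definition~\ref{def:twist}, each such fibre is determined by $U_a(x,z)$ together with, when $|U_a(x,z)|=1$, the digit $\rho_a(x,z)$: the three entries equal $U_a(x,z)$ except at $y=\rho_a(x,z)$, where the sign is $-U_a(x,z)$. Homogeneous fibres are fixed by $U_a$ alone. Hence $G_x(t)$ is a function of row $x$ of $U_a$ and the digits in that row only.

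No step is a genuine obstacle. The one point needing care is that the shift $k_t$ has $\Z_9$-component exactly $3$, a multiple of $3$. This is what keeps the offset within the same residue $x$, so that no term couples two different rows. A shift with $\Z_9$-component not divisible by $3$ would mix rows $x$ and $x+1$, and the additivity would fail. It is also worth stating explicitly that $t$ is arbitrary, including $t=0$, which gives $\PAF_a(3)$, and that the same argument applies verbatim to $b$ and to general $H$.
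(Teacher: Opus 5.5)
Your proof is correct and follows the paper's argument. The paper's proof is the single observation that $j$ and $j+3$ lie in the same coset of $\{0,3,6\}$; you also spell out the CRT translation of $k_t$ to $(3,t)$ and explain why the row-$x$ fibres are fixed by $U_a$ and the digits.

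One small slip in your closing aside: in $\Z_{99}$ one has $k_0=66$, so $t=0$ gives $\PAF_a(66)$, the shift $(3,0)$. The value $\PAF_a(3)$ is instead $\PAF_a(k_3)$. This does not affect the proof.
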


\begin{proof}
The elements $j$ and $j+3$ of $\Z_9$ lie in the same coset of $\{0,3,6\}$.
\end{proof}

The ten conditions $\PAF_a(k_t)+\PAF_b(k_t)=2$, $t\ne0$, are thus exact integer conditions with one term per row on each side, and serve as the meet-in-the-middle keys. They are necessary conditions; every match is subsequently tested at all shifts.

Some lifts differ only by transformations that preserve the compressed pair.
The next result identifies such transformations and shows how to choose
representatives without losing any solution up to this action. The generated
group has order $27$, describing pair orbits rather than the work saved on
the two separate lists.

\begin{proposition}\label{prop:sym}
Let $(U_a,U_b)$ satisfy Theorem~\ref{thm:twolevel}\,(i), with sums $13$ and $15$. Let $\tau$ be the translation of $\Z_{99}$ by $33$ and $\mu$ the automorphism $i\mapsto67i$, acting on functions by $(\tau a)(i)=a(i-33)$ and $(\mu a)(67i)=a(i)$.
\begin{enumerate}
\item[\textup{(a)}] $\tau$ fixes $U_a$, adds $2$ to every digit and preserves the key; it acts freely on the lifts of $U_a$ when $\nu_1(U_a)>0$.
\item[\textup{(b)}] $\mu$ fixes $U_a$, adds $x$ to every digit in row $x$, and maps the key by $f(\kappa_0,\kappa_1,\kappa_2)=(\kappa_0,\,\kappa_2-\kappa_1,\,-\kappa_1)$. The map $f$ has order $3$, and its fixed points are the keys with $\kappa_1=\kappa_2=0$.
\item[\textup{(c)}] The group $\Gamma\cong\Z_3^3$ generated by $\tau$ on $a$, $\tau$ on $b$, and $\mu$ on both preserves D-optimality and $(U_a,U_b)$, maps the lift class of $\kappa$ onto that of $\kappa$, $f(\kappa)$ or $f^2(\kappa)$, keeps $N_A$ and $N_B$, and acts freely on lift pairs.
\end{enumerate}
Hence every D-optimal lift of $(U_a,U_b)$ is found, up to $\Gamma$, by solving one class in each orbit of $f$, with one lift from each $\tau$-orbit on each side.
\end{proposition}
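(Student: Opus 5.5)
The plan is to work in the coordinates $(j,z)\in\Z_9\times\Z_{11}$ with $j=x+3y$. Under $i\mapsto(i\bmod9,i\bmod11)$, translation by $33$ becomes $(j,z)\mapsto(j+6,z)$, since $33\equiv6\pmod9$ and $33\equiv0\pmod{11}$. Multiplication by $67$ becomes $(j,z)\mapsto(4j,z)$, since $67\equiv4\pmod9$ and $67\equiv1\pmod{11}$. Both maps preserve each coset $x+3\Z_9$, so they fix the row structure, and each acts by a permutation of $y$ within every fibre.

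For (a), $(\tau a)(x+3y,z)=a(x+3(y-2),z)$. The fibre sums are unchanged, so $U_a$ is fixed, and the minority position moves from $y$ to $y+2$, so every digit gains $2$. The row sums satisfy $(\tau A_9)(j)=A_9(j-6)$, and a translation leaves $\PAF_{A_9}$ and hence $\kappa$ unchanged. Freeness holds because $\tau^k$ with $k\in\{1,2\}$ shifts each digit by $2k\not\equiv0\pmod 3$, so it moves any lift having at least one digit.

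For (b), $4(x+3y)=x+3(x+y)\pmod 9$. Since $(\mu a)(4j,z)=a(j,z)$, the value at position $(x,y)$ moves to $(x,x+y)$. Again the fibre sums are unchanged, and digits in row $x$ gain $x$. For the key, $\mu A_9(4j)=A_9(j)$, so $\PAF_{\mu A_9}(4k)=\PAF_{A_9}(k)$, which gives $P'_k=P_{7k}$. Reading off indices modulo $9$ and using $P_k=P_{-k}$: $P'_0=P_0$, $P'_3=P_{21}=P_3$, $P'_1=P_7=P_2$, $P'_4=P_{28}=P_1$ and $P'_2=P_{14}=P_5=P_4$. Hence $\kappa'=(P_0-P_3,\,P_2-P_1,\,P_4-P_1)=(\kappa_0,\kappa_2-\kappa_1,-\kappa_1)$. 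Iterating $f$ gives $(\kappa_1,\kappa_2)\mapsto(\kappa_2-\kappa_1,-\kappa_1)\mapsto(-\kappa_2,\kappa_1-\kappa_2)\mapsto(\kappa_1,\kappa_2)$, so $f$ has order $3$. A fixed point requires $\kappa_1=\kappa_2-\kappa_1$ and $\kappa_2=-\kappa_1$, which forces $\kappa_1=\kappa_2=0$.

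For (c), translations preserve each PAF separately, and the common automorphism $\mu$ relabels both PAFs by the same bijection of shifts. D-optimality is therefore preserved, as is $(U_a,U_b)$ by (a) and (b). The generators commute: $\mu$ applied to $a$ followed by translation by $t$ equals translation by $67t$ followed by $\mu$, and $67\cdot33\equiv33\pmod{99}$. Each generator has order $3$, so $\Gamma$ is a quotient of $\Z_3^3$.

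The key moves consistently on both sides. Since $f$ is linear and fixes $T=(196,0,0)$, we have $f(T-\kappa)=T-f(\kappa)$, so $\mu$ sends class $\kappa$ to class $f(\kappa)$. Moreover $\mu$ gives bijections between the lift sets, so $N_A(f(\kappa))=N_A(\kappa)$, and likewise for $N_B$.

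The main obstacle is freeness of the action on pairs, which also shows $\Gamma$ has order exactly $27$. An element $\tau^{\alpha}\otimes\tau^{\beta}$ composed with $\mu^{\gamma}$ shifts the digits of $a$ in row $x$ by $2\alpha+\gamma x$ and those of $b$ by $2\beta+\gamma x$. A nonidentity element fixing the pair must therefore make these shifts vanish in every row that carries digits. I will show that each side has digits in at least two distinct rows. Then $2\alpha+\gamma x\equiv0$ for two distinct values of $x$ forces $\gamma\equiv0$ and $\alpha\equiv0$, and similarly $\beta\equiv0$. The two-row claim should follow from a count: by Corollary~\ref{cor:nu}, $\nu_1=49$ in total, and Proposition~\ref{prop:homogeneous} bounds the homogeneous fibres. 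For a single side, I would show that it has more than $11$ nonhomogeneous fibres, which cannot fit in one row of $11$. Here I expect to use the per-side energy and the sums $13$ and $15$. The paper asserts that each side has at least $(3w-1)/2=16$ such fibres; I would derive this from $\nu_3=17$ total together with the sum constraints. This per-side bound is the step I expect to need the most care.

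The final sentence then follows. Orbits of $\Gamma$ on pairs meet every $f$-orbit of classes. Within a class, $\tau$ on each side separately preserves the class, so one representative per $\tau$-orbit on each side suffices.
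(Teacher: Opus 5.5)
Your proposal is correct and follows the paper's proof essentially step for step: the same coordinate description of $\tau$ and $\mu$, the same relation $P'_k=P_{7k}$ yielding $f$, and the same freeness argument comparing the digit shifts $2\alpha+\gamma x$ across two rows. The step you flag as needing care is immediate and needs no per-side energy. The compressed pair has exactly $\nu_3=17$ entries $\pm3$ in total, so each side has at least $33-17=16>11$ nonhomogeneous fibres and hence digits in at least two rows. Take this count from Corollary~\ref{thm:subgroup_three}, which uses only (i) and the sums and so covers all lift pairs, not from Corollary~\ref{cor:nu}, which as stated assumes D-optimality.
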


\begin{proof}
Since $33\equiv6\pmod9$ and $33\equiv0\pmod{11}$, $\tau$ maps $(x+3y,z)$ to $(x+3(y+2),z)$. Since $67\equiv4\pmod9$, $67\equiv1\pmod{11}$ and $4(x+3y)\equiv x+3(x+y)\pmod 9$, $\mu$ maps $(x+3y,z)$ to $(x+3(y+x),z)$. Both maps preserve every coset $\{x,x+3,x+6\}\times\{z\}$, hence $U_a$. For the key, $\PAF_{\mu a}(67k)=\PAF_a(k)$, so the 9-compression $A'_9$ of $\mu a$ has $\PAF_{A'_9}(k)=P_{7k}$, as $4^{-1}\equiv7\pmod 9$; this gives $(P'_1,P'_2,P'_3,P'_4)=(P_2,P_4,P_3,P_1)$ and the map $f$. Translations of either block and a common automorphism preserve D-optimality, and the remaining transformation statements are direct checks. Each compression has at least $33-17=16$ nonhomogeneous fibres, so its digits occur in at least two rows. A stabilising element of $\Gamma$ adds $2t_a+mx$ to the digits of $a$ and $2t_b+mx$ to those of $b$. Comparing two rows forces $m=0$, then $t_a=t_b=0$. Thus every pair orbit has $27$ elements.
\end{proof}

\subsection{Completeness of lifting}\label{subsec:search_scope}

The preceding results give a complete search for the lifts of a fixed
compressed pair. The next statement records what must be enumerated and
which reductions preserve completeness. It does not prescribe which
compressed pairs a discovery search should select.

\begin{proposition}\label{prop:lift_completeness}
Let $(U_a,U_b)$ have sums $13$ and $15$ and satisfy
Theorem~\ref{thm:twolevel}\,\textup{(i)}.  Form all nonempty digit groups and
all matched lift classes using their full counts.  In each class, enumerate
the lifts on both sides and match the additive autocorrelation keys of
Lemma~\ref{lem:additive}, retaining every match for the final test
$\PAF_a(k)+\PAF_b(k)=2$ at every nonzero $k\in\Z_{99}$.
If pruning discards candidates only through necessary conditions, the output
consists precisely of the D-optimal lifts of $(U_a,U_b)$.
If instead one uses a representative of each class orbit under $f$ and one
representative of each $\tau$-orbit on either side, the output contains a
representative of every $\Gamma$-orbit of D-optimal lifts and no other pairs.
\end{proposition}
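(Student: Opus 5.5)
The plan is to prove the two assertions separately. Both reduce to showing that every pruning step applies a necessary condition and that the final step applies a sufficient one.

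\emph{Exhaustive version.} Every binary lift of $U_a$ is a digit assignment on $S(U_a)$. By Lemma~\ref{lem:digitsums}, it lies in exactly one triple of row digit groups, and that triple determines its length-nine compression and hence its key $\kappa(A_9)$. The same holds for $U_b$. So the union over all keys of the enumerated class lists contains every lift on each side, provided all nonempty digit groups are used.

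Now let $(a,b)$ be a D-optimal lift pair. By Lemma~\ref{lem:fourier}, the combined PSD is $2\cdot99-2=196$ at every nontrivial character. In particular this holds at the characters $\chi_{s,0}$ with $s$ of order $9$. Proposition~\ref{prop:key} then gives $\kappa(A_9)+\kappa(B_9)=T$, so $(a,b)$ lies in the matched class indexed by $\kappa(A_9)$. By Proposition~\ref{prop:classcounts} the two counts for this class are positive, so the class is not discarded.

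Within the class, D-optimality gives $\PAF_a(k_t)+\PAF_b(k_t)=2$ for $t\ne0$. Lemma~\ref{lem:additive} writes each side as a sum of row terms that the enumeration computes exactly, so the additive keys match and the pair is retained. Any further pruning discards only candidates that violate necessary conditions, by hypothesis, so $(a,b)$ reaches the final test and passes it.

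Conversely, anything that passes the final test satisfies $\PAF_a(k)+\PAF_b(k)=2$ for all $k\ne0$. Since $a$ and $b$ are $\pm1$ sequences, $\PAF_a(0)+\PAF_b(0)=198$. So the output pair is D-optimal by definition; the equivalent route through Theorem~\ref{thm:twolevel} is not needed here. Every output is also a lift of $(U_a,U_b)$ by construction. This proves that the output is exactly the set of D-optimal lifts.

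\emph{Reduced version.} The argument uses Proposition~\ref{prop:sym}. Given a D-optimal lift pair $(a,b)$ with class key $\kappa$, first choose a power of $\mu$, applied to both sides, that moves the pair into the chosen representative class of the $f$-orbit of $\kappa$. This is possible by part (c). The power of $\mu$ preserves D-optimality and $(U_a,U_b)$.

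Next, apply translations $\tau^{i}$ on $a$ and $\tau^{j}$ on $b$ independently, so that each side becomes the stored $\tau$-orbit representative. By part (a), $\tau$ preserves the key, so the class does not change. These translations also lie in $\Gamma$, so D-optimality is preserved.

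The resulting pair is in the same $\Gamma$-orbit as $(a,b)$, lies in a selected class, and consists of enumerated representatives. The exhaustive argument, restricted to this class and these lists, then shows that it is output. No other pairs are output, because the final test certifies D-optimality and every candidate is a lift of $(U_a,U_b)$.

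\emph{Main obstacle.} The delicate point is the interaction between the two reductions. Applying $\mu$ changes the digits row-dependently, and it could a priori move a lift outside the set of stored $\tau$-representatives. For this reason the translations must be applied after $\mu$, and one has to check that the translations keep the pair inside the chosen class. This is exactly what part (a) provides, since $\tau$ fixes the key.

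It also has to be made precise what "one representative per $\tau$-orbit on either side" means. I read it as independent orbit representatives on the two sides. This is legitimate because $\Gamma$ contains $\tau$ acting on $a$ alone and $\tau$ acting on $b$ alone.

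Freeness of the action, from part (c), is not needed for completeness. It only guarantees that the representatives are well defined as orbits of size $3$ and $27$.
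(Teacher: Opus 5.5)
Your proposal is correct and follows the paper's route. Every lift lies in a unique row digit group; Proposition~\ref{prop:key} places each D-optimal pair in a matched class; Lemma~\ref{lem:additive} makes its additive keys match; necessary pruning cannot remove it, and the final test is equivalent to D-optimality. Proposition~\ref{prop:sym} then handles the orbit reductions, and you spell out explicitly what the paper leaves implicit: apply a power of $\mu$ to reach the representative class, then side-wise translations, which preserve the key.
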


\begin{proof}
Every lift has a unique digit assignment and hence belongs to a row digit
group on each row.  Lemma~\ref{lem:digitsums} counts all such assignments,
and Proposition~\ref{prop:key} places every D-optimal lift pair in a matched
class.  Its two additive keys satisfy the matching conditions, so complete
enumeration and matching retain it.  Necessary pruning conditions cannot
remove it, and the final autocorrelation test is equivalent to D-optimality.
Conversely, every output passes that test.  Proposition~\ref{prop:sym} shows
that the stated orbit reductions preserve the compressed pair and
D-optimality and retain a representative of each $\Gamma$-orbit.
\end{proof}

Pruning means rejecting a candidate as soon as a necessary condition fails.
For a D-optimal lift at length $99$, each individual PSD at a nontrivial
character is at most $196$. Thus, if $z$ is a partial Fourier sum at such a character and $R$
bounds the absolute contribution of the unassigned terms, then
$|z|>14+R$ excludes that lift. An implementation must use exact bounds or
conservative error bounds that retain uncertain candidates. Completeness
within a fixed compression does not imply that every compression or class
was visited by the reported search.

\subsection{Search organisation}\label{sec:search}

For selecting work, a class with full side counts $N_A$ and $N_B$ contains
$N_AN_B$ candidate pairs.  When one side fits in the store, $N_A+N_B$ is an
idealised meet-in-the-middle work estimate, assuming comparable generation
and processing costs for each lift.  This motivates
\[
 E(\kappa)=\frac{N_A(\kappa)N_B(\kappa)}
 {N_A(\kappa)+N_B(\kappa)}.
\]
If storage requires chunks, the opposite list may have to be generated or
probed repeatedly.  Processing matching-key collisions and performing the final tests add work; spectral pruning and the sizes and organisation of the
row groups also affect the actual work.  The ranking by $E$ is therefore a
search heuristic, not a proved optimal ranking or an exact runtime model.

Preliminary scoring truncates the counts using an integer cutoff $\alpha\ge1$. For $S\in\{A,B\}$ let
$N_{S,\alpha}(\kappa)$ be the sum of the product multiplicities over
triples of row digit groups in that class for which
$\prod_x N_{S,x}(g_x)\ge\alpha$, and put
\[
 E_\alpha(\kappa)=
 \frac{N_{A,\alpha}(\kappa)N_{B,\alpha}(\kappa)}
 {N_{A,\alpha}(\kappa)+N_{B,\alpha}(\kappa)},
\]
with value zero when both counts vanish.  The default $\alpha=10^7$ gives a
\emph{truncated class score}.  Truncation precedes aggregation by key: a
discarded combination of row groups can contribute to an otherwise large class, so this
score is not the exact class efficiency.  With $\alpha=1$ all nonempty
groups contribute and $N_{S,1}=N_S$, $E_1=E$.  Exact class counts, including
the counts reported for the explicit construction, use $\alpha=1$.

The targeted search retains compressed pairs whose largest truncated class
score is at least $\theta=3.5\cdot10^8$ and ranks them by that score.  It
recomputes the full counts for selected pairs and lifts classes with
$E\ge\theta$.  Both preliminary
selection and the class threshold restrict the search.  They provide no
global coverage statement; a compressed pair discarded at the preliminary
stage may still contain a class with a large exact class score.

The length-$99$ search follows the compressions in Lemma~\ref{lem:split}. A \emph{compressed library} stores representatives of equivalence classes of compressed pairs. The following stages use the scores above to schedule the remaining work.

\paragraph{Stage 1: Compressed libraries.}
Form integer sequences of lengths $3$ and $11$ with the sums $13$ or $15$, odd entries bounded by $99/d$, and the energy and spectral bounds from \eqref{eq:const}. Match their autocorrelation vectors to obtain complementary compressed pairs, reducing by independent translations and reversals and a common multiplier. The length-nine conditions will be imposed by the integer key, so a separate length-nine library is unnecessary for the lift.

\paragraph{Stage 2: Compressed arrays.}
For each length-$11$ pair, construct $3\times11$ arrays with those column sums, entries in $\{\pm1,\pm3\}$, and a row-sum pair belonging to a stored length-three class. Expand the library representatives under their equivalence operations as needed for the fixed column sums; the row sums need not equal the stored representatives themselves. Retain pairs with compressed autocorrelation constants $(202,6)$. Reject partial assignments that exceed the homogeneous-fibre budget or a necessary spectral bound. Choose one representative from each orbit under the group of order $18$ generated by independent row rotations and common row reversal. Score the retained compressed pairs using their row digit groups.

\paragraph{Stage 3: Class matching.}
Compute the full class counts by Proposition~\ref{prop:classcounts}. In a selected class, enumerate its row digit assignments, apply necessary spectral bounds, and match the ten additive keys of Lemma~\ref{lem:additive}. Use the orbit reduction of Proposition~\ref{prop:sym}, and test every surviving pair by its complete integer autocorrelation vector.

\paragraph{Stage 4: The driver.}
Process library classes in batches and store the resulting compressed pairs in a pool ordered by their largest truncated class score. The discovery run used batches of $64$ library classes. Each lifting round selected $640$ compressed pairs from the pool for processing on $32$ workers. The driver stopped when a pair passed the final exact test. The counts used to record completed work are specified below.

For completed class orbits define
\[
 W=\sum_{\kappa}N_A(\kappa)N_B(\kappa),
\]
where the sum uses full counts and includes each class and its distinct
$f$-images once for each compressed pair.  Here completion means that the
required enumeration and matching within the selected class orbit have
finished.  The value $W$ records the candidate-pair volume represented by
those orbits before spectral pruning and symmetry reduction.  It is neither
the number of pairs individually subjected to every final test nor a
probability of finding a design.

\section{An invariant lift at length 85}\label{sec:lift85}

At length $85$, the compression-and-matching approach has a smaller invariant
branch. Under $\Z_{85}\cong\Z_5\times\Z_{17}$, multiplication by $69$ acts as
$(j,z)\mapsto(-j,z)$. An invariant binary sequence therefore has fibres
\[
 (a(0,z),\ldots,a(4,z))=(a_0,a_1,a_2,a_2,a_1),\qquad
 U_a(z)=a_0+2a_1+2a_2.
\]
For $\sigma\in\{\pm1\}$, the triples $(a_0,a_1,a_2)$ allowed by a compressed
entry are
\[
\begin{array}{c|c}
 U_a(z)&(a_0,a_1,a_2)\\\hline
 5\sigma&(\sigma,\sigma,\sigma)\\
 3\sigma&(-\sigma,\sigma,\sigma)\\
 \sigma&(\sigma,\sigma,-\sigma),\ (\sigma,-\sigma,\sigma).
\end{array}
\]
Thus only entries of absolute value one require a choice. If $\nu_j$ counts
entries of absolute value $j$ in both compressed sequences, then a pair
complementary with constants $(178,10)$ has
\[
 \nu_1+\nu_3+\nu_5=34,\qquad
 \nu_1+9\nu_3+25\nu_5=178,
 \qquad \nu_1=16+2\nu_5\le28.
\]
Here the inequality follows from $\nu_3+3\nu_5=18$. Consequently, this
compressed pair has exactly $2^{\nu_1}\le2^{28}$ invariant lift pairs before
the remaining channel conditions are imposed.

The nontrivial channels also admit exact integer matching. For a lift $a$, put
\[
 P_a=(a_0-a_2)/2,\qquad Q_a=(a_1-a_2)/2,
\]
as functions on $\Z_{17}$, and define
\begin{align*}
 K_{a,0}(k)&=\PAF_{P_a}(k)+\PAF_{Q_a}(k),\\
 K_{a,1}(k)&=\sum_z\bigl(P_a(z)Q_a(z+k)+Q_a(z)P_a(z+k)\bigr)
                    -\PAF_{Q_a}(k).
\end{align*}
Both are integer-valued and even in $k$, so their values at $0\le k\le8$
form an eighteen-coordinate key. Reflection identifies opposite channels.
The next result shows that matching these integer vectors enforces all
remaining channel conditions within the invariant family.

\begin{proposition}
\label{prop:lift85}
Let $(U_a,U_b)$ have entries in $\{\pm1,\pm3,\pm5\}$, sums $7$ and $17$,
and complementary constants $(178,10)$. Two invariant binary lifts form a
D-optimal pair on $\Z_{85}$ if and only if
\[
 K_{a,0}+K_{b,0}=42\delta_0,\qquad K_{a,1}+K_{b,1}=0.
\]
\end{proposition}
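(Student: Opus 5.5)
The plan is to apply Theorem~\ref{thm:channels} with $G=\Z_5\times\Z_{17}$, $K=\Z_5\times\{0\}$, $m=5$ and $d=17$. I take the extensions $\eta_r(j,z)=\zeta_5^{rj}$; they exist because $\Z_5$ is a direct factor, and they satisfy $\eta_{\bar\rho}=\overline{\eta_\rho}$.

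The hypotheses match the theorem. The sums $7,17$ give $49+289=338=4v-2$. The compressed constants $(178,10)$ give \eqref{eq:compressed_channel}, since $2m=10$. So two invariant lifts form a D-optimal pair if and only if \eqref{eq:other_channels} holds for one character from each conjugate pair, namely $r=1$ (paired with $4$) and $r=2$ (paired with $3$). The required value there is $(2v-2)\delta_0=168\delta_0$.

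The next step is to compute these two channels for a reflection-invariant lift. Put $c_1=\zeta_5+\zeta_5^{-1}$ and $c_2=\zeta_5^2+\zeta_5^{-2}$, so that $c_1+c_2=-1$ and $c_1^2=2+c_2=1-c_1$. Using the fibre pattern $(a_0,a_1,a_2,a_2,a_1)$,
\[
F_{a,1}(z)=a_0+a_1c_1+a_2c_2=(a_0-a_2)+(a_1-a_2)c_1=2\bigl(P_a(z)+c_1Q_a(z)\bigr).
\]
Similarly $F_{a,2}=2(P_a+c_2Q_a)$. Since $a_i=\pm1$, the functions $P_a$ and $Q_a$ take values in $\{0,\pm1\}$, so the channels are real.

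Expanding the autocorrelation and reducing with $c_1^2=1-c_1$ gives
\[
\PAF_{F_{a,1}}=4\bigl(\PAF_{P_a}+c_1(\text{cross})+c_1^2\PAF_{Q_a}\bigr)=4\bigl(K_{a,0}+c_1K_{a,1}\bigr),
\]
where the cross term is $\sum_z\bigl(P_a(z)Q_a(z+k)+Q_a(z)P_a(z+k)\bigr)$. The $r=2$ channel is obtained by applying the Galois automorphism $\zeta_5\mapsto\zeta_5^2$, which sends $c_1\mapsto c_2$ and fixes the integer keys. Hence $\PAF_{F_{a,2}}=4(K_{a,0}+c_2K_{a,1})$. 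Evenness of the keys in $k$ follows from the reflection symmetry of the correlation sums.

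With these formulas, condition \eqref{eq:other_channels} for $r=1$ reads
\[
4(K_{a,0}+K_{b,0})+4c_1(K_{a,1}+K_{b,1})=168\delta_0 .
\]
Since $c_1=(\sqrt5-1)/2$ is irrational, $1$ and $c_1$ are linearly independent over $\Q$. Because both sums are integers, this single equation is equivalent to the pair $K_{a,0}+K_{b,0}=42\delta_0$ and $K_{a,1}+K_{b,1}=0$.

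Those integer identities are invariant under the Galois automorphism, so they also yield the $r=2$ equation. Conversely, the $r=1$ equation alone already forces them. This proves both directions.

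I expect the main obstacle to be bookkeeping rather than ideas: the cross-term sign and the reduction of $c_1^2$ must land exactly on the stated definition of $K_{a,1}$, including the $-\PAF_{Q_a}$ term. I also need to confirm that Theorem~\ref{thm:channels} requires only one representative from each pair $\{1,4\}$ and $\{2,3\}$, which is what lets the Galois argument cover both remaining channels.
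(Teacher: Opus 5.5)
Your proposal is correct and follows essentially the same argument as the paper. You write the primitive-fifth-root channel as $2(P_a+c_1Q_a)$ and reduce with $c_1^2=1-c_1$ to get $4(K_{a,0}+c_1K_{a,1})$. You obtain the other channel by Galois conjugation and conclude using the linear independence of $1,c_1$ over $\Q$. The only cosmetic difference is that you invoke Theorem~\ref{thm:channels} explicitly, with the conjugate pairs $\{1,4\}$ and $\{2,3\}$, to handle the compressed channel and the opposite-index channels. The paper instead cites reflection invariance and the D-optimal spectral conditions directly.
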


\begin{proof}
Put $\gamma=2\cos(2\pi/5)$, so $\gamma^2=1-\gamma$.
The channel at a primitive fifth root is
\[
 a_0+\gamma a_1+(-1-\gamma)a_2=2(P_a+\gamma Q_a),
\]
whose autocorrelation is $4(K_{a,0}+\gamma K_{a,1})$.
The other distinct nontrivial channel is its conjugate under
$\gamma\mapsto-1-\gamma$; reflection invariance identifies the channels
with opposite indices. Since $1,\gamma$ are linearly independent over
$\Q$, the stated integer identities are equivalent to the combined
autocorrelation $168\delta_0$ in both nontrivial channels. Their spectra
therefore sum to $168$ at every character of $\Z_{17}$. The compression
channel already has spectral sum $168$ at nontrivial characters and
$7^2+17^2=338$ at the trivial character. Together these are precisely the
D-optimal spectral conditions at length $85$.
\end{proof}

The invariant construction enumerates the allowed fibre patterns and matches the two lists by the complementary integer keys of Proposition~\ref{prop:lift85}. Each match is checked through $\lambda_X(k)+\lambda_Y(k)=31$ for every nonzero shift. The local patterns also apply on $\Z_5\times H$ for other odd abelian groups $H$: Theorem~\ref{thm:channels} supplies the corresponding spectral target. Thus invariant five-entry fibres and unrestricted three-entry fibres are two ways to implement the general lifting method.

\section{Applications to two unresolved parameter sets}\label{sec:designs}

The two lifting branches yield supplementary difference sets with parameters
$(99;43,42;36)$ and $(85;39,34;31)$. The first gives a D-optimal matrix of order
$198$; the second realises a new parameter set at the already known order $170$.
The explicit blocks below establish existence independently of the search that
produced them.

\subsection{A D-optimal matrix of order 198}
The method of Sections~\ref{sec:lift99} and~\ref{sec:search} gives the following
pair, with no fixing multiplier imposed on its blocks.

\begin{theorem}\label{thm:main}
Let $X$ and $Y$ be the following subsets of $\Z_{99}$:
\begin{align*}
X&=\{0,1,3,5,11,16,22,23,24,28,29,31,36,37,40,44,45,46,48,50,54,55,56,58,59,60,\\
&\qquad 61,63,72,73,74,75,79,81,82,84,86,88,89,90,92,93,96\},\\
Y&=\{0,2,3,11,12,19,21,22,24,27,31,34,35,38,43,44,49,50,51,54,55,57,59,60,69,71,\\
&\qquad 72,74,75,77,78,79,81,84,85,88,89,90,92,94,96,97\}.
\end{align*}
The pair $(X,Y)$ forms a supplementary difference set $(99;43,42;36)$ in $\Z_{99}$. A D-optimal design of order $198$ exists, and the maximal determinant of a $\pm1$ matrix of order $198$ is exactly $2^{99}\cdot197\cdot98^{98}$.
\end{theorem}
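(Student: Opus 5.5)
The plan is to verify the explicit blocks directly, then invoke the standard constructions already recorded. First check the sizes: count the listed elements to confirm $|X|=43$ and $|Y|=42$. The D-optimal value is then $\lambda=r+s-(v-1)/2=43+42-49=36$, matching the claimed parameters. Equivalently, the sign sequences $a=1-2\cdot\mathbf 1_X$ and $b=1-2\cdot\mathbf 1_Y$ have sums $99-86=13$ and $99-84=15$, and $13^2+15^2=394=4v-2$, consistent with \eqref{eq:sums}.

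The core step is the difference count. For every nonzero $k\in\Z_{99}$ we must show $\lambda_X(k)+\lambda_Y(k)=36$. Because $\lambda_Z(-k)=\lambda_Z(k)$, only $k=1,\dots,49$ need checking. Each check tabulates the differences $x-x'\bmod 99$ over the $43\cdot42+42\cdot41=3528$ ordered pairs of distinct elements. This total equals $98\cdot36$, so the counts average exactly $36$; the verification must confirm that every count equals this average.

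This is a finite mechanical calculation and the main obstacle of the proof. There is no structural shortcut, since no multiplier symmetry was imposed on the blocks. I would carry it out by exact integer computation of the full autocorrelation vector, $\PAF_a(k)+\PAF_b(k)=2$ for $k\ne0$, using $\PAF_a(k)=v-4r+4\lambda_X(k)$. A useful cross-check is to confirm the compressed conditions of Theorem~\ref{thm:twolevel}(i) and (ii) on the $3\times11$ arrays $U_a,U_b$ and the channels $F_a,F_b$. An independent route is the Fourier test of Lemma~\ref{lem:fourier}: check that the combined PSD equals $196$ at the nontrivial characters, with exact arithmetic in $\Q(\zeta_{99})$ or the integer keys of Proposition~\ref{prop:cyclotomic_key}.

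Once the SDS property holds, $(a,b)$ is a D-optimal pair with constants $(198,2)$. Lemma~\ref{lem:twocirc} then gives a $198\times198$ matrix $M$ with $|\det M|=2^{99}\cdot197\cdot98^{98}$. This equals the Ehlich--Wojtas bound \eqref{eq:EW} at $v=99$. The bound applies to every $\pm1$ matrix of order $2v$ with $v$ odd, so attaining it shows that the maximal determinant at order $198$ is exactly this value, and that $M$ is a D-optimal design.
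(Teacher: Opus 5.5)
Your proposal is correct and follows the paper's proof. Both verify $|X|=43$, $|Y|=42$ and the combined difference count $36$ at every nonzero shift by direct computation, then apply Lemma~\ref{lem:twocirc} and note that the Ehlich--Wojtas bound \eqref{eq:EW} is attained; the paper also shows the determinant via the eigenvalues $394$ (once) and $196$ (multiplicity $98$) of $196I_{99}+2J_{99}$, which is what the determinant formula you cite from that lemma encodes.
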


\begin{proof}
Direct counting in the displayed blocks gives $|X|=43$, $|Y|=42$, and
\[
\#\{(x,x')\in X^2:x-x'=t\}
+\#\{(y,y')\in Y^2:y-y'=t\}=36
\qquad(1\le t\le98),
\]
where differences are taken modulo $99$. Thus $(X,Y)$ has the stated
parameters. The associated sign sequences have autocorrelation sum $198$ at
zero and $2$ elsewhere. By Lemma~\ref{lem:twocirc}, their two-circulant matrix
satisfies
\[
MM^{\top}=I_2\otimes(196I_{99}+2J_{99}).
\]
Each diagonal block has eigenvalues $394$ once and $196$ with multiplicity
$98$. Therefore $|\det M|=394\cdot196^{98}=2^{99}\cdot197\cdot98^{98}$,
which attains the Ehlich--Wojtas bound.
\end{proof}

Table~\ref{tab:U} records the compression and digits of this solution, making
its position in the lifting construction explicit. At a fibre $(x,z)$, an
entry $\pm3$ assigns its sign to all three positions. An entry $\sigma=\pm1$
with digit $\rho$ assigns $-\sigma$ to the position satisfying
$i\equiv x+3\rho\pmod9$, $i\equiv z\pmod{11}$, and $\sigma$ to the other two.

\begin{table}[H]
\centering\small
\caption{The compressed pair of the design, rows $x\in\Z_3$ and columns $z\in\Z_{11}$, with the digits $\rho(x,z)$ as subscripts.}\label{tab:U}
\setlength{\tabcolsep}{4.2pt}
\begin{tabular}{c*{11}{c}}
\toprule
$U_a$ & 0 & 1 & 2 & 3 & 4 & 5 & 6 & 7 & 8 & 9 & 10\\
\midrule
0 & $+1_{0}$ & $+1_{0}$ & $-1_{1}$ & $-1_{2}$ & $-1_{2}$ & $-1_{0}$ & $+1_{0}$ & $+1_{1}$ & $-1_{1}$ & $+1_{1}$ & $+1_{0}$ \\
1 & $-3$ & $+1_{0}$ & $-1_{1}$ & $+1_{1}$ & $+1_{0}$ & $-1_{1}$ & $-1_{1}$ & $-1_{2}$ & $+3$ & $+1_{1}$ & $+3$ \\
2 & $-1_{1}$ & $-3$ & $+3$ & $+3$ & $-1_{2}$ & $+1_{1}$ & $+1_{1}$ & $+1_{0}$ & $+1_{0}$ & $+1_{1}$ & $+3$ \\
\midrule
$U_b$ & 0 & 1 & 2 & 3 & 4 & 5 & 6 & 7 & 8 & 9 & 10\\
\midrule
0 & $+1_{0}$ & $-1_{0}$ & $-3$ & $-1_{0}$ & $+1_{0}$ & $-1_{1}$ & $+1_{0}$ & $-1_{0}$ & $+1_{2}$ & $+1_{1}$ & $-1_{2}$ \\
1 & $-3$ & $+1_{2}$ & $+1_{2}$ & $+3$ & $+3$ & $+1_{1}$ & $+1_{1}$ & $+3$ & $-1_{2}$ & $-1_{0}$ & $+1_{2}$ \\
2 & $-3$ & $+1_{2}$ & $-1_{1}$ & $+3$ & $-1_{2}$ & $-1_{1}$ & $+1_{1}$ & $+3$ & $+1_{0}$ & $+3$ & $+3$ \\
\bottomrule
\end{tabular}
\end{table}

The two arrays have $26$ and $23$ nonhomogeneous fibres, respectively, so their
unrestricted digit assignments give $3^{26}3^{23}=3^{49}$ lift pairs. Their
9-compressions are
\[
A_9=(-5,-3,1,1,3,1,5,3,7),\qquad
B_9=(-1,7,1,-1,1,5,-1,1,3).
\]
Their keys are $(148,24,12)$ and $(48,-24,-12)$, whose sum is $(196,0,0)$ as
required by Proposition~\ref{prop:key}.

The following calculation measures the exact reduction for the displayed
compression and identifies the classes selected by the discovery threshold.
It does not describe every compression encountered in the search.

\begin{proposition}\label{prop:structure}
Using every digit group, equivalently $\alpha=1$, the pair in
Table~\ref{tab:U} has $149$ nonempty matched lift classes. Together these contain
exactly $26{,}710{,}293{,}936{,}839{,}797{,}449$ pairs, approximately $1.12\cdot10^{-4}$ of the $3^{49}$ lift pairs. Exactly three
classes have $E\ge3.5\cdot10^8$, with side-$A$ keys
\[
(148,24,12),\qquad(148,-12,-24),\qquad(148,-12,12).
\]
They form one orbit of the key action in Proposition~\ref{prop:sym}. Each has
$N_A=643{,}674{,}780$, $N_B=868{,}628{,}835$ and
$E\simeq3.697\cdot10^8$. The displayed solution belongs to the first class.
\end{proposition}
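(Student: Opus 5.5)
This is a finite computational statement, so I would prove it by an exact computation that follows Proposition~\ref{prop:classcounts} with $\alpha=1$. The computation is organised so that every step except the final aggregation can be checked by hand from Table~\ref{tab:U}.

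\emph{Row data.} For each of the six rows (three of $U_a$, three of $U_b$), I would read off $p_x$, $m_x$ and $c_x$ from Table~\ref{tab:U}. As a consistency check, $\sum_x p_x+m_x$ should equal $26$ on side $A$ and $23$ on side $B$. Next I would expand $(w_0+w_1+w_2)^{p_x}(w_0^{-1}+w_1^{-1}+w_2^{-1})^{m_x}$ by successive multiplication, collecting monomials exactly with big integers. This gives the row digit groups $g_x$ and their multiplicities $N_{S,x}(g_x)$. The constraint $g_{x,0}+g_{x,1}+g_{x,2}=p_x-m_x$ lets us index each group by two coordinates. As a sanity check, the multiplicities must sum to $3^{p_x+m_x}$, and the side-$A$ group counts should be $108,61,46$ as stated.

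\emph{Class counts.} For every triple $\boldsymbol g$, I would form $C_S(\boldsymbol g)(x+3y)=c_{S,x}-2g_{x,y}$ and compute its key $\kappa$ from Proposition~\ref{prop:key}, using the integer autocorrelations $P_0,\dots,P_4$. I would then accumulate $\prod_x N_{S,x}(g_x)$ into a dictionary indexed by $\kappa$ for side $A$, and by $T-\kappa$ for side $B$, with $T=(196,0,0)$. Proposition~\ref{prop:general9} bounds the number of triples by $12^6$, and the actual number is far smaller, so this step is cheap and exact.

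\emph{Verification of the claims.} Intersecting the supports of the two dictionaries gives the matched classes; I would check that there are $149$. Summing $N_A(\kappa)N_B(\kappa)$ over them gives the stated total, and dividing by $3^{49}$ gives the fraction $1.12\cdot10^{-4}$. Computing $E(\kappa)$ for each matched class and thresholding at $3.5\cdot10^8$ should leave exactly the three listed keys.

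That these three keys form one orbit follows from Proposition~\ref{prop:sym}(b) by direct arithmetic:
\[
f(148,24,12)=(148,-12,-24),\qquad f(148,-12,-24)=(148,-12,12).
\]
Part (c) of the same proposition also shows that $N_A$ and $N_B$, and hence $E$, are constant on the orbit. So it suffices to verify $N_A=643{,}674{,}780$ and $N_B=868{,}628{,}835$ for one key, and then evaluate $E$. Finally, the displayed solution lies in the first class because its computed key $\kappa(A_9)=(148,24,12)$ was recorded above.

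\emph{Main obstacle.} The main difficulty is not conceptual. It is ensuring the aggregation is exact and complete. This requires arbitrary-precision integer counts (the total exceeds $2^{64}$), all nonempty groups with no truncation, and exact integer keys rather than floating-point PSD values. I would cross-check the totals in two ways: the sum of $N_A$ over all keys should equal $3^{26}$, and likewise for $N_B$ with $3^{23}$. I would also recompute one class by brute-force enumeration over a smaller sub-row test, to validate the generating-function code.
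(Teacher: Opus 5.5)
Your proposal is correct and is essentially the paper's own argument. You compute the row multiplicities from the generating function of Lemma~\ref{lem:digitsums}, aggregate the three-row products by the integer key of Proposition~\ref{prop:key} with no truncation, sum $N_A(\kappa)N_B(\kappa)$ over the matched keys, and obtain the three-key orbit by applying $f$ from Proposition~\ref{prop:sym}. Your extra checks are sound additions, not a different route: the row totals $3^{p_x+m_x}$, the group counts $108,61,46$, the identity $\sum_\kappa N_A(\kappa)=3^{26}$, and the use of Proposition~\ref{prop:sym}(c) to reduce the orbit verification to a single key.
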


\begin{proof}
For each row of the two arrays, the coefficient formula in
Lemma~\ref{lem:digitsums} gives the multiplicity of every signed digit-sum
triple. Forming the three-row products and aggregating them by the integer key
of Proposition~\ref{prop:key}, with no discarded groups, gives the stated
counts. The fraction is $3^{-49}\sum_{\kappa}N_A(\kappa)N_B(\kappa)$, with
the sum over all matched keys. Applying the map $f$ of
Proposition~\ref{prop:sym} gives the displayed three-key orbit.
\end{proof}

The exact key therefore excludes $99.988838\%$ of the unrestricted lift pairs for this compression, a factor of approximately $8{,}959$, without discarding any D-optimal lift. The threshold then selects three classes with a combined candidate volume of $1.6773\cdot10^{18}$ pairs, or $6.28\%$ of the volume satisfying the key condition. This further selection can omit solutions. These reductions refer to the displayed compression, not to every compressed pair encountered in the search.

Within each of the three selected classes, direct pairwise testing would consider $N_AN_B=559{,}114{,}474{,}270{,}281{,}300$ pairs. Separate-list matching instead starts from $N_A+N_B=1{,}512{,}303{,}615$ lifts before pruning and orbit reduction. The ratio is about $3.70\cdot10^8$, but it compares candidate pairs with an idealised generation and lookup count, not measured execution times. Collision processing, final tests and any repeated probing caused by storage limits also contribute to runtime.

These are full class counts; the preliminary truncated score is used to
select and schedule the targeted search. The reported discovery run processed $512$ of
the $51{,}155$ length-$11$ library classes on a 32-core workstation. It reached
the solution after $1{,}256{,}320$ compressed pairs had entered the lifting
stage, using $2{,}993$ CPU-hours for that stage and approximately $3{,}200$
CPU-hours overall. These are the reported discovery costs. No controlled timing comparison with an unstructured exhaustive lift is reported, so the candidate reductions above are not asserted as observed speed-up factors. The run searched selected classes and
does not enumerate all order-$198$ designs.

\subsection{The parameter set \texorpdfstring{$(85;39,34;31)$}{(85;39,34;31)}}
The multiplier-restricted lifting branch in Section~\ref{sec:lift85} gives the
following pair. D-optimal matrices of order $170$ were already known
\cite{GK99}; the contribution here is the previously unresolved supplementary
difference set parameter set.

\begin{theorem}\label{thm:85}
Let $X$ and $Y$ be the following subsets of $\Z_{85}$:
\begin{align*}
X&=\{1, 4, 7, 8, 9, 15, 17, 20, 21, 22, 25, 26, 28, 29, 30, 32, 33, 34, 35, 38,\\
&\qquad 40, 42, 43, 44, 46, 48, 51, 58, 61, 62, 67, 68, 69, 70, 72, 73, 77, 82, 83\},\\
Y&=\{0, 1, 2, 4, 6, 10, 16, 17, 19, 20, 21, 28, 29, 32, 34, 36, 39, 46, 48, 51,\\
&\qquad 53, 56, 59, 60, 62, 68, 69, 74, 75, 76, 80, 82, 83, 84\}.
\end{align*}
The pair $(X,Y)$ forms a supplementary difference set $(85;39,34;31)$ in $\Z_{85}$. Both blocks are invariant under multiplication by 69, and yield a $170 \times 170$ D-optimal matrix.
\end{theorem}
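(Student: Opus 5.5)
The statement is a finite verification, so the plan is to check three things directly: the block sizes and difference counts, the invariance under multiplication by $69$, and then to invoke Lemma~\ref{lem:twocirc}. First count $|X|=39$ and $|Y|=34$, and confirm that $\lambda=r+s-(v-1)/2=39+34-42=31$. This is the value required by the SDS/D-optimality equivalence recalled after Lemma~\ref{lem:twocirc}. It is also consistent with \eqref{eq:sums}: the sign sequences have sums $85-78=7$ and $85-68=17$, and $7^2+17^2=338=4\cdot85-2$.

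Next, verify invariance. The map $i\mapsto 69i \bmod 85$ should send $X$ onto $X$ and $Y$ onto $Y$. Since $69\equiv-1\pmod5$ and $69\equiv1\pmod{17}$, it is the reflection $(j,z)\mapsto(-j,z)$ on $\Z_5\times\Z_{17}$, and $69^2\equiv1\pmod{85}$. So it suffices to check that each element's image lies in the same block. This can be done orbit by orbit: fixed points are the multiples of $5$, and the remaining elements pair up into $2$-cycles. This step is not needed for D-optimality; it only records that the pair lies in the invariant family of Section~\ref{sec:lift85}.

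The main work is checking $\lambda_X(t)+\lambda_Y(t)=31$ for $1\le t\le84$. Here the invariance roughly halves the effort. The difference counts satisfy $\lambda_Z(t)=\lambda_Z(-t)$ trivially, and also $\lambda_Z(69t)=\lambda_Z(t)$. Hence it suffices to check one representative of each orbit of $\{\pm1,\pm69\}$ acting on $\Z_{85}\setminus\{0\}$, which gives about $21$ shifts. Alternatively, I would verify the conditions of Proposition~\ref{prop:lift85}. This means computing the length-$17$ compressions $U_a,U_b$ and checking that they are complementary with constants $(178,10)$. Then I would form $P_a,Q_a,P_b,Q_b$ from the fibre patterns $(a_0,a_1,a_2,a_2,a_1)$ and confirm the eighteen integer identities $K_{a,0}+K_{b,0}=42\delta_0$ and $K_{a,1}+K_{b,1}=0$ for $0\le k\le8$. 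I expect this bookkeeping step to be the main obstacle. It carries no conceptual difficulty, but it must be done without error, ideally by machine with the direct difference count as an independent cross-check.

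Finally, the sign sequences $a=1-2\mathbf 1_X$ and $b=1-2\mathbf 1_Y$ have $\PAF_a(t)=85-4\cdot39+4\lambda_X(t)$, and similarly for $b$. Their PAF sum is therefore $170-4\cdot73+4\cdot31=2$ at every nonzero shift and $170$ at zero, so $(a,b)$ is a D-optimal pair. Lemma~\ref{lem:twocirc} then gives $MM^\top=I_2\otimes(168I_{85}+2J_{85})$ and $|\det M|=338\cdot168^{84}=2^{85}\cdot169\cdot84^{84}$, which attains \eqref{eq:EW} for $v=85$. The result is a $170\times170$ D-optimal matrix.
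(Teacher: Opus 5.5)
Your proposal is correct and follows the paper's proof: check $|X|=39$ and $|Y|=34$, the combined difference count $31$ at all $84$ nonzero shifts, and invariance under multiplication by $69$, then pass to PAF sum $2$ and apply Lemma~\ref{lem:twocirc}. One small slip in your optional shortcut: $\{\pm1,\pm69\}$ has $26$ orbits on $\Z_{85}\setminus\{0\}$, not about $21$, because the orbits through the nonzero multiples of $5$ or of $17$ have size two.
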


\begin{proof}
The displayed blocks have sizes $39$ and $34$, and their combined difference
multiplicity is $31$ at each of the $84$ nonzero residues modulo $85$.
Multiplication of their elements by $69$ modulo $85$ leaves each block
unchanged. Their sign sequences therefore have autocorrelation sum $2$ away
from zero, and Lemma~\ref{lem:twocirc} gives a D-optimal matrix of order $170$.
\end{proof}

For this pair the compressions onto $\Z_{17}$ are
\begin{align*}
 U_a&=(-3,-1,3,3,-3,1,3,1,-1,-3,1,1,1,3,1,-1,1),\\
 U_b&=(-5,1,-3,3,1,1,1,3,1,3,3,1,-1,5,1,1,1).
\end{align*}
They have combined fibre counts $\nu_1=20$, $\nu_3=12$, $\nu_5=2$. A five-entry fibre with sum $\pm1$ has ten unrestricted binary patterns, one with sum $\pm3$ has five, and one with sum $\pm5$ has one. The unrestricted compressed pair therefore has $10^{20}5^{12}\simeq2.4414\cdot10^{28}$ binary lift pairs, whereas multiplier-$69$ invariance leaves exactly $2^{20}=1{,}048{,}576$. This saving comes from restricting the construction to invariant blocks and need not preserve every unrestricted solution. Integer matching and final difference tests are still required. The displayed lift satisfies the target $(42\delta_0,0)$ of Proposition~\ref{prop:lift85} and the direct difference test above.

The recorded collection contains $56$ solutions. Reversal of one block is
allowed in SDS equivalence \cite[Section~2]{Do11}, but may transpose the
associated matrix rather than preserve its matrix class. The following
classification explains why seven SDS representatives give fourteen
matrix classes. It concerns only this recorded collection.

\begin{proposition}\label{prop:matrix85}
The $56$ recorded solutions represent exactly seven SDS equivalence classes,
with representatives labelled $1,3,5,7,9,11,13$ in Appendix~\ref{app:85}.
Their associated matrices represent exactly fourteen classes under signed
row and column permutations, and exactly seven classes when transposition is
also allowed. The latter classes have the same seven representative labels.
\end{proposition}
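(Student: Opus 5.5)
This is a finite computational classification, so the proof will be an explicit certified computation organised by invariants that are checked on one side and by explicit maps that are exhibited on the other.

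\textbf{Step 1: reduction to seven SDS classes.} For the upper bound, we exhibit for each of the $56$ recorded pairs $(X,Y)$ an explicit equivalence to one of the labels $1,3,5,7,9,11,13$. Such an equivalence consists of independent translations $t_X,t_Y\in\Z_{85}$, a common unit $u\in\Z_{85}^\times$ and optional reversals of each block. The search is finite: $\varphi(85)=64$ units, $85^2$ translation pairs and $4$ reversal patterns. It is reduced by translating each block to a canonical form, for instance the lexicographically least translate of each block. We then compare the canonical forms over the $64\cdot 4$ choices of unit and reversal pattern. For the lower bound, we compute for each of the seven representatives the lexicographically least element of its full equivalence orbit. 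This least element is a complete invariant, because any two equivalent pairs have the same orbit. The seven values are then seen to be distinct. Since the group generated by these operations is small, this canonical-form computation is exact rather than heuristic.

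\textbf{Step 2: matrix classes.} For each representative we form the matrix $M$ of Lemma~\ref{lem:twocirc}. We then decide equivalence under signed row and column permutations by computing a canonical form of the associated bipartite graph. In this graph, rows and columns are vertices, and entries $\pm1$ are encoded by the standard signed-incidence gadget: each row and each column is doubled into a $\pm$ pair, so that sign changes become permutations. Graph canonical labelling (nauty style) then decides isomorphism exactly. We also compute the canonical form of $M^{\top}$. The claim requires three facts. First, the $14$ matrices $M_i,M_i^{\top}$ have pairwise distinct canonical forms. Second, each $M_i^{\top}$ is inequivalent to every $M_j$. Third, for every recorded solution, its matrix coincides in canonical form with some $M_i$ or $M_i^{\top}$. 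The third fact explains the pairing: reversing one block, which is permitted in SDS equivalence, transposes the corresponding circulant. The resulting matrix is equivalent to the transpose of $M$ rather than to $M$ itself, which is why each SDS class splits into two matrix classes. Allowing transposition then merges each pair $\{M_i,M_i^{\top}\}$, leaving seven classes with the same labels.

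\textbf{Step 3: certificates.} To make the inequivalence part robust independently of canonical-labelling software, I would also compute cheaper invariants that separate the classes. One candidate is profile-type data, such as the distribution of $|r_i\cdot r_j\cdot r_k\cdot r_l|$ over row quadruples. Another is the multiset of $4$-row partial inner products, computed separately for rows and for columns. A row/column asymmetry in this data would separate $M_i$ from $M_i^{\top}$.

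\textbf{Main obstacle.} I expect the hard part to be proving inequivalence of $M_i$ and $M_i^{\top}$, and between matrices coming from different SDS classes. Since $MM^{\top}$ is the same for all D-optimal matrices of this form, the Gram matrix gives no information. Equivalence can a priori use arbitrary permutations, not just group-induced ones, so the SDS classification does not transfer automatically. I would rely on exact graph canonical forms first. If those prove too expensive at order $340$ vertices, I would fall back on higher-order profiles, refining them until all $14$ classes separate.
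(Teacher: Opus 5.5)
Your plan is correct, but it certifies the classification by a different route from the paper.

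\textbf{Upper bounds.} The paper uses explicit maps rather than canonical forms. Canonicalising the $56$ pairs under translations and common units (which induce row and column permutations) gives the fourteen entries. The relations $Y_{2k}=c_k-Y_{2k-1}$ and the identity $M(A,B^{\top})=D\,M(A,B)^{\top}D$, with $D=\operatorname{diag}(R,-R)$, then show structurally why reversing one block moves the matrix to the transposed class.

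\textbf{Lower bounds.} The paper uses one elementary invariant, $h_2(M)$. This counts column $4$-subsets with $\bigl|\sum_r M_{ri}M_{rj}M_{rk}M_{rl}\bigr|=2$. Its fourteen values are distinct, and the seven unordered pairs $\{h_2(M_i),h_2(M_i^{\top})\}$ are distinct. Every SDS operation induces signed row and column equivalence, possibly followed by transposition. Hence the same invariant also separates the seven SDS classes. Your separate orbit canonicalisation in Step~1 is therefore not needed, although it is correct and cheap.

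\textbf{What each route buys.} Your nauty-style canonical labelling is a complete decision procedure. It would succeed even if no cheap invariant separated the classes. However, the certificate then depends on canonical-labelling software, and it gives no structural explanation of the transposition pairing. The paper's route yields a table of fourteen integers, checkable by directly enumerating all $\binom{170}{4}$ four-subsets. That invariant is essentially the row/column four-profile you list as a fallback in Step~3.

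\textbf{One bookkeeping gap.} Your three listed facts only give ``at most fourteen'' matrix classes. For ``exactly fourteen'' you must also record that each class $M_i^{\top}$ is actually realised by some recorded solution. In the paper this is entry $i+1$, via $M_{i+1}\sim M_i^{\top}$. Your computation over all $56$ solutions would reveal this, but the argument should state it.
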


\begin{proof}
Canonicalising the $56$ pairs under independent translations and common units
modulo $85$ gives the fourteen appendix entries. Write $(X_i,Y_i)$ for entry
$i$ and $M_i$ for its two-circulant matrix. Table~\ref{tab:matrix85} gives
\[
X_{2k}=X_{2k-1},\qquad Y_{2k}=c_k-Y_{2k-1},\qquad
(c_1,\ldots,c_7)=(32,4,3,3,3,3,3),
\]
where $c-Y=\{c-y\bmod85:y\in Y\}$. Each successive pair of entries is
therefore equivalent under reversal and translation of its second block,
giving at most seven SDS classes.

To separate the matrices, define the four-column invariant
\[
h_2(M)=\#\left\{\{i,j,k,l\}:i<j<k<l,\quad
\left|\sum_rM_{ri}M_{rj}M_{rk}M_{rl}\right|=2\right\}.
\]
Row signs cancel, column signs disappear under the absolute value, and
permutations reorder the terms or the four-subsets. Hence $h_2$ is invariant
under signed row and column permutations. Enumerating all
$\binom{170}{4}=33{,}585{,}370$ four-subsets gives the values in
Table~\ref{tab:matrix85}, distinct for all fourteen entries.

Independent block translations and a common multiplier induce row and column
permutations of the two-circulant matrix. If $R$ reverses cyclic indices and
$D=\operatorname{diag}(R,-R)$, direct multiplication gives
\[
M(A,B^{\top})=D\,M(A,B)^{\top}D.
\]
Thus the matrices of each successive pair are equivalent after transposition,
and every SDS equivalence operation induces signed row and column
equivalence, possibly followed by transposition. The seven unordered pairs
$\{h_2(M),h_2(M^{\top})\}$ in the table are distinct, separating the seven
matrix classes even with transposition and hence the seven SDS classes.
\end{proof}

\begin{table}[H]\centering\small
\caption{The seven SDS classes among the recorded order-85 solutions and the fourteen associated matrix classes. Labels refer to Appendix~\ref{app:85}; in each row $X_j=X_i$ and the displayed relation for $Y_j$ is modulo $85$. The matrices $M_i$ and $M_j$ are equivalent after transposition, with $h_2(M_j)=h_2(M_i^\top)$.}\label{tab:matrix85}
\begin{tabular}{rrlrr}\toprule
SDS rep. $i$ & Partner $j$ & $Y_j$ & $h_2(M_i)$ & $h_2(M_j)$\\\midrule
1 & 2 & $32-Y_1$ & 4,096,830 & 4,054,160\\
3 & 4 & $4-Y_3$ & 4,021,010 & 4,061,640\\
5 & 6 & $3-Y_5$ & 4,077,705 & 4,010,215\\
7 & 8 & $3-Y_7$ & 4,039,285 & 4,108,645\\
9 & 10 & $3-Y_9$ & 4,101,505 & 4,054,840\\
11 & 12 & $3-Y_{11}$ & 4,110,515 & 4,083,740\\
13 & 14 & $3-Y_{13}$ & 4,027,810 & 4,062,235\\\bottomrule
\end{tabular}\end{table}

All fourteen appendix entries are retained because they represent different
matrix classes under signed row and column permutations. Their pairing in
Table~\ref{tab:matrix85} records precisely why only seven are needed as SDS
representatives.

\section{Conclusion and further applications}\label{sec:open}
The procedure developed here turns complementary compressed pairs into explicit local choices that can be counted and matched. Applying it resolves the SDS parameter sets $(99;43,42;36)$ and $(85;39,34;31)$, giving a D-optimal matrix of the previously unresolved order $198$ and new parameters at order $170$. The recorded length-$85$ collection gives seven SDS classes and fourteen matrix classes under signed row and column permutations. The retained representatives show exactly how allowing transposition joins the matrix classes in pairs.

The main practical benefit is the information available before full lifts are generated. On $\Z_9\times H$, signed digit sums give exact class sizes using polynomially many row-group combinations. Researchers can use these counts to identify compatible classes and assess their storage requirements before enumeration. The three-integer spectral key excludes incompatible pairs; additive autocorrelation matching then avoids comparing every pair of retained lifts. For the displayed length-$99$ compression, the exact key reduces the candidate volume by a factor of about $8{,}959$. For the length-$85$ example, reflection invariance leaves $2^{20}$ local lift pairs. The key preserves all D-optimal lifts of its fixed compression, whereas reflection invariance and the discovery thresholds restrict the searched family. Neither reduction is a measured runtime factor.

The subgroup criterion applies to other odd abelian groups, and the homogeneous-fibre bound can reject impossible fixing multipliers before enumeration. Each application still needs suitable compressed pairs and explicit local lift patterns. The remaining cost depends on class sizes, pruning and storage. Prime lengths have no nontrivial proper compression subgroup; larger fibres may require larger integer keys. The polynomial bound covers class counting, not the full construction problem.

Length $153=9\cdot17$ is a direct further application of the order-nine branch. Compression along its order-three subgroup gives arrays on $\Z_3\times\Z_{17}$ with the same three minority-sign choices. The signed digit sums determine a length-nine compression and an integer-key target $(304,0,0)$. The two parameter sets recorded in \cite{Do22} are $(153;70,66;60)$ and $(153;72,65;61)$, with sequence sums $(13,21)$ and $(9,23)$. Each admissible compressed pair leaves $3^{76}$ unrestricted lift pairs. Proposition~\ref{prop:general9} transfers the exact class calculation, but feasible class sizes and lifting costs still need to be established computationally. This is a further case suited to the method, not the smallest remaining open matrix order; no construction of order $306$ is asserted here.

Appendix~\ref{app:reflection} applies the framework to reflection-invariant sequences on $\Z_3^2\times\Z_{11}$. Its exact criterion gives additional constraints for a search on this noncyclic group, without establishing nonexistence. Further computations are needed to decide whether that group supports the same SDS parameters and to determine how many inequivalent order-$198$ matrices the method produces.

\paragraph{Data availability.}
The explicit blocks and the fourteen order-$170$ matrix representatives are included in the article. The recorded length-$85$ solutions, equivalence mappings and four-column profile data, together with verification routines, are available from the corresponding author on request. The numerical class counts in Proposition~\ref{prop:structure} are determined by the displayed compressed arrays and the finite coefficient formula in Proposition~\ref{prop:classcounts}.

\clearpage
\appendix
\section{Representatives at \texorpdfstring{$v=85$}{v=85}}\label{app:85}
The following fourteen representatives of $(85;39,34;31)$ are inequivalent under independent translations and a common multiplier. Each block is translated to its lexicographically smallest form. Under SDS equivalence, which also allows independent reversal, there are exactly seven classes among the recorded solutions, represented by labels $1,3,5,7,9,11,13$.

We retain all fourteen entries because their $170\times170$ matrices are pairwise inequivalent under signed row and column permutations (Proposition~\ref{prop:matrix85}). Successive entries $(1,2),(3,4),\ldots,(13,14)$ have the same first block, and their second blocks satisfy the reversal--translation relations in Table~\ref{tab:matrix85}. Each such pair therefore represents one SDS class and two matrix classes exchanged by transposition.

\scriptsize
\begin{multicols}{2}
\noindent\textbf{[1]}
X = \{0, 1, 2, 3, 4, 5, 6, 7, 8, 10, 14, 18, 20, 25, 26, 29, 31, 33, 35, 36, 41, 44, 49, 50, 52, 53, 57, 58, 63, 64, 68, 70, 73, 75, 77, 78, 79, 80, 82\} \\
Y = \{0, 1, 2, 4, 5, 8, 12, 14, 17, 18, 20, 21, 23, 28, 30, 31, 32, 35, 42, 43, 44, 49, 50, 52, 53, 57, 61, 62, 63, 66, 68, 69, 77, 83\}

\vspace{4pt}\noindent\textbf{[2]}
X = \{0, 1, 2, 3, 4, 5, 6, 7, 8, 10, 14, 18, 20, 25, 26, 29, 31, 33, 35, 36, 41, 44, 49, 50, 52, 53, 57, 58, 63, 64, 68, 70, 73, 75, 77, 78, 79, 80, 82\} \\
Y = \{0, 1, 2, 4, 9, 11, 12, 14, 15, 18, 20, 24, 27, 28, 30, 31, 32, 34, 40, 48, 49, 51, 54, 55, 56, 60, 64, 65, 67, 68, 73, 74, 75, 82\}

\vspace{4pt}\noindent\textbf{[3]}
X = \{0, 1, 2, 3, 4, 5, 6, 7, 9, 10, 11, 13, 14, 20, 21, 25, 26, 30, 31, 33, 35, 37, 40, 44, 45, 47, 48, 53, 58, 59, 61, 65, 67, 69, 74, 76, 79, 80, 82\} \\
Y = \{0, 1, 2, 3, 4, 13, 15, 17, 21, 25, 28, 29, 30, 33, 37, 38, 39, 40, 44, 46, 47, 50, 53, 56, 61, 62, 65, 67, 68, 69, 70, 72, 75, 80\}

\vspace{4pt}\noindent\textbf{[4]}
X = \{0, 1, 2, 3, 4, 5, 6, 7, 9, 10, 11, 13, 14, 20, 21, 25, 26, 30, 31, 33, 35, 37, 40, 44, 45, 47, 48, 53, 58, 59, 61, 65, 67, 69, 74, 76, 79, 80, 82\} \\
Y = \{0, 1, 2, 3, 4, 9, 14, 17, 19, 20, 21, 22, 24, 27, 28, 33, 36, 39, 42, 43, 45, 49, 50, 51, 52, 56, 59, 60, 61, 64, 68, 72, 74, 76\}

\vspace{4pt}\noindent\textbf{[5]}
X = \{0, 1, 2, 3, 4, 5, 6, 10, 11, 14, 15, 16, 17, 20, 21, 22, 24, 27, 29, 32, 35, 37, 38, 40, 41, 44, 45, 48, 52, 54, 56, 57, 58, 60, 62, 63, 70, 77, 78\} \\
Y = \{0, 1, 2, 3, 5, 8, 9, 12, 14, 18, 21, 24, 25, 30, 32, 34, 38, 39, 40, 45, 47, 57, 59, 60, 62, 64, 65, 66, 71, 74, 75, 76, 79, 83\}

\vspace{4pt}\noindent\textbf{[6]}
X = \{0, 1, 2, 3, 4, 5, 6, 10, 11, 14, 15, 16, 17, 20, 21, 22, 24, 27, 29, 32, 35, 37, 38, 40, 41, 44, 45, 48, 52, 54, 56, 57, 58, 60, 62, 63, 70, 77, 78\} \\
Y = \{0, 1, 2, 3, 5, 9, 12, 13, 14, 17, 22, 23, 24, 26, 28, 29, 31, 41, 43, 48, 49, 50, 54, 56, 58, 63, 64, 67, 70, 74, 76, 79, 80, 83\}

\vspace{4pt}\noindent\textbf{[7]}
X = \{0, 1, 2, 3, 4, 5, 6, 8, 9, 12, 18, 19, 21, 25, 27, 28, 29, 33, 35, 40, 43, 44, 47, 48, 51, 53, 55, 56, 57, 62, 64, 72, 73, 74, 76, 77, 78, 79, 83\} \\
Y = \{0, 1, 2, 3, 6, 8, 10, 13, 15, 24, 25, 28, 33, 37, 41, 42, 43, 44, 47, 50, 53, 55, 61, 62, 64, 65, 67, 69, 70, 74, 75, 77, 80, 81\}

\vspace{4pt}\noindent\textbf{[8]}
X = \{0, 1, 2, 3, 4, 5, 6, 8, 9, 12, 18, 19, 21, 25, 27, 28, 29, 33, 35, 40, 43, 44, 47, 48, 51, 53, 55, 56, 57, 62, 64, 72, 73, 74, 76, 77, 78, 79, 83\} \\
Y = \{0, 1, 2, 3, 7, 8, 11, 13, 14, 18, 19, 21, 23, 24, 26, 27, 33, 35, 38, 41, 44, 45, 46, 47, 51, 55, 60, 63, 64, 73, 75, 78, 80, 82\}

\vspace{4pt}\noindent\textbf{[9]}
X = \{0, 1, 2, 3, 4, 5, 6, 7, 8, 10, 12, 14, 21, 23, 28, 30, 31, 33, 37, 42, 43, 45, 48, 49, 53, 57, 58, 62, 63, 64, 66, 67, 69, 74, 75, 77, 80, 81, 82\} \\
Y = \{0, 1, 2, 3, 5, 10, 11, 13, 14, 15, 17, 22, 27, 28, 30, 32, 36, 40, 41, 44, 47, 48, 50, 51, 54, 55, 57, 64, 66, 67, 70, 72, 78, 79\}

\vspace{4pt}\noindent\textbf{[10]}
X = \{0, 1, 2, 3, 4, 5, 6, 7, 8, 10, 12, 14, 21, 23, 28, 30, 31, 33, 37, 42, 43, 45, 48, 49, 53, 57, 58, 62, 63, 64, 66, 67, 69, 74, 75, 77, 80, 81, 82\} \\
Y = \{0, 1, 2, 3, 9, 10, 16, 18, 21, 22, 24, 31, 33, 34, 37, 38, 40, 41, 44, 47, 48, 52, 56, 58, 60, 61, 66, 71, 73, 74, 75, 77, 78, 83\}

\vspace{4pt}\noindent\textbf{[11]}
X = \{0, 1, 2, 3, 4, 5, 7, 8, 9, 10, 11, 15, 18, 20, 21, 23, 24, 26, 27, 29, 33, 35, 37, 40, 41, 42, 45, 46, 50, 52, 53, 55, 57, 58, 65, 67, 69, 78, 79\} \\
Y = \{0, 1, 2, 3, 6, 7, 9, 14, 18, 23, 24, 27, 30, 32, 34, 36, 37, 42, 46, 47, 52, 53, 56, 60, 61, 63, 64, 65, 67, 71, 72, 73, 75, 78\}

\vspace{4pt}\noindent\textbf{[12]}
X = \{0, 1, 2, 3, 4, 5, 7, 8, 9, 10, 11, 15, 18, 20, 21, 23, 24, 26, 27, 29, 33, 35, 37, 40, 41, 42, 45, 46, 50, 52, 53, 55, 57, 58, 65, 67, 69, 78, 79\} \\
Y = \{0, 1, 2, 3, 10, 13, 15, 16, 17, 21, 23, 24, 25, 27, 28, 32, 35, 36, 41, 42, 46, 51, 52, 54, 56, 58, 61, 64, 65, 70, 74, 79, 81, 82\}

\vspace{4pt}\noindent\textbf{[13]}
X = \{0, 1, 2, 3, 4, 5, 6, 8, 9, 10, 14, 16, 17, 18, 20, 22, 23, 27, 32, 33, 34, 37, 39, 41, 44, 45, 47, 52, 54, 55, 57, 58, 60, 62, 65, 67, 71, 76, 79\} \\
Y = \{0, 1, 2, 3, 6, 7, 9, 10, 15, 17, 26, 28, 29, 30, 37, 40, 44, 46, 49, 50, 53, 58, 59, 61, 62, 65, 66, 68, 69, 70, 74, 75, 76, 80\}

\vspace{4pt}\noindent\textbf{[14]}
X = \{0, 1, 2, 3, 4, 5, 6, 8, 9, 10, 14, 16, 17, 18, 20, 22, 23, 27, 32, 33, 34, 37, 39, 41, 44, 45, 47, 52, 54, 55, 57, 58, 60, 62, 65, 67, 71, 76, 79\} \\
Y = \{0, 1, 2, 3, 8, 12, 13, 14, 18, 19, 20, 22, 23, 26, 27, 29, 30, 35, 38, 39, 42, 44, 48, 51, 58, 59, 60, 62, 71, 73, 78, 79, 81, 82\}
\end{multicols}

\normalsize

\section{A reflection criterion on the noncyclic group of order 99}
\label{app:reflection}

On the noncyclic group of order $99$, reflection makes each compressed
entry determine its binary fibre uniquely. The channel criterion can
therefore test this restricted family directly through its compressions.
Let $G=\Z_3\times\Z_3\times\Z_{11}$ and
$r(x,y,z)=(x,-y,z)$. For an $r$-invariant binary sequence write
$a_0(x,z)=a(x,0,z)$, $a_1(x,z)=a(x,1,z)=a(x,-1,z)$ and
\[
 U_a=a_0+2a_1,\qquad T_a=(a_0-a_1)/2,
\]
with analogous notation for $b$. These functions are on
$\Z_3\times\Z_{11}\cong\Z_{33}$. The sequences $T_a,T_b$ are ternary,
meaning that their entries lie in $\{0,\pm1\}$. A weighing matrix $W(n,k)$
is a matrix $W$ of order $n$ with entries in $\{0,\pm1\}$ and
$WW^{\top}=kI_n$. The next result identifies such a matrix as a necessary
part of the construction and gives the further constraints linking it
to the compressed signs.

\begin{proposition}
\label{prop:reflection_appendix}
The map $a\mapsto U_a$ is a bijection between $r$-invariant binary sequences
and functions with values in $\{\pm1,\pm3\}$. An invariant pair with sums
$13,15$ is D-optimal if and only if
\[
 \PAF_{U_a}+\PAF_{U_b}=6\quad\text{off zero},\qquad
 \PAF_{T_a}+\PAF_{T_b}=49\delta_0.
\]
In this case $\sum T_a=-7$, $\sum T_b=0$, and their group matrices give
the weighing matrix
$W=\left(\begin{smallmatrix}C_{T_a}&C_{T_b}\\
 -C_{T_b}^{\top}&C_{T_a}^{\top}\end{smallmatrix}\right)$,
with $WW^{\top}=49I_{66}$, where $C_T=(T(h-g))_{g,h\in\Z_3\times\Z_{11}}$.
Furthermore, let $S_s$ give the sign of each entry of $U_s$, for $s=a,b$, and put
\[
 P=\PAF_{S_a}+\PAF_{S_b},\qquad
 C(k)=\sum_q\sum_{s\in\{a,b\}}
       \bigl(S_s(q)T_s(q+k)+T_s(q)S_s(q+k)\bigr).
\]
Then $U_s=3S_s+2T_s$, $T_s=(U_s^3-9U_s)/8$, and
\[
 \sum S_a=9,\quad \sum S_b=5,\quad C(0)=-98,\qquad
 3P(k)+2C(k)=2\quad(k\ne0).
\]
The supports, or sets of nonzero positions, of $T_a,T_b$ have odd and even sizes respectively, summing
to $49$, and $C(k)\equiv4\pmod6$ for $k\ne0$.
\end{proposition}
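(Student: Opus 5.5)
The plan is to specialise Theorem~\ref{thm:channels} to the subgroup $K=\{0\}\times\Z_3\times\{0\}$, whose cosets are the fibres $\{(x,y,z):y\in\Z_3\}$. Everything else then follows from small identities on the local patterns.

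\textbf{Step 1: the bijection.} On each fibre an $r$-invariant sequence is determined by $(a_0,a_1)\in\{\pm1\}^2$, and $U=a_0+2a_1$. The four pairs give
\[
(1,1)\mapsto3,\qquad(-1,-1)\mapsto-3,\qquad(1,-1)\mapsto-1,\qquad(-1,1)\mapsto1 .
\]
This is a bijection onto $\{\pm1,\pm3\}$ fibrewise, and hence globally.

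The same table gives the pointwise identities I need. First, $T=(a_0-a_1)/2$ vanishes when $|U|=3$ and equals $-U$ when $|U|=1$. From this one checks $U=3S+2T$ and $T=(U^3-9U)/8$, where $S$ is the sign of $U$. Also $ST=-T^2$ pointwise, and $T^2=1$ exactly on the fibres with $|U|=1$.

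\textbf{Step 2: the D-optimality criterion.} I choose the extension $\eta(x,y,z)=\zeta_3^{y}$, which is trivial on the other two factors. The channel \eqref{eq:channels} is then
\[
F_a=a_0+a_1(\zeta_3+\zeta_3^2)=a_0-a_1=2T_a ,
\]
with no phase factor. Its conjugate channel is handled by the last clause of Theorem~\ref{thm:channels}.

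Here $m=3$ and $v=99$, and the sums satisfy $13^2+15^2=394=4v-2$. So Theorem~\ref{thm:channels} says D-optimality is equivalent to two conditions: $\PAF_{U_a}+\PAF_{U_b}=6$ off zero, and $4(\PAF_{T_a}+\PAF_{T_b})=196\,\delta_0$. The latter is the stated condition $\PAF_{T_a}+\PAF_{T_b}=49\delta_0$.

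The weighing matrix then comes from the same block computation as Lemma~\ref{lem:twocirc}, applied to the group matrices on $\Z_3\times\Z_{11}$. The off-diagonal blocks cancel by commutativity of group matrices, and the diagonal blocks equal $C_{T_a}C_{T_a}^\top+C_{T_b}C_{T_b}^\top=49I_{33}$.

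\textbf{Step 3: arithmetic consequences.} The Fourier identity at the trivial character gives $(\sum T_a)^2+(\sum T_b)^2=49$. Its only integer solutions are $\{0,\pm7\}$ in some order.

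Summing $U=3S+2T$ gives $13=3\sum S_a+2\sum T_a$ and $15=3\sum S_b+2\sum T_b$. The value $\sum T_a=0$ is impossible because $3\nmid13$, and $\sum T_a=7$ gives a non-integer. Hence $\sum T_a=-7$ and $\sum S_a=9$, and then $\sum T_b=0$ and $\sum S_b=5$.

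The support sizes have the same parity as the sums, so they are odd and even respectively. Their total is $\PAF_{T_a}(0)+\PAF_{T_b}(0)=49$. Using $ST=-T^2$, we get $C(0)=-2\cdot49=-98$.

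For $k\ne0$, expanding $\PAF_U$ through $U=3S+2T$ gives
\[
\PAF_U=9\PAF_S+6\,(\text{cross term})+4\PAF_T .
\]
Summing over $s\in\{a,b\}$ and using $\PAF_{T_a}+\PAF_{T_b}=0$ off zero yields $6=9P+6C$, that is, $3P+2C=2$.

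Finally, for a $\pm1$ sequence of odd length $n$ one has $\PAF(k)\equiv n\pmod 4$. Hence $P(k)\equiv66\equiv2\pmod4$. Writing $P=2+4j$ gives $C=-2-6j\equiv4\pmod6$.

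The only delicate point is Step 2: I must confirm that the chosen extension produces exactly $2T$ with no twist, so that the channel autocorrelation equals $4\PAF_T$. With $\eta$ trivial on $\Z_3\times\Z_{11}$ this is immediate. The remaining steps are routine bookkeeping.
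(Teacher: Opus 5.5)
Your proof is correct and follows the paper's argument: the fibrewise table for the bijection and pointwise identities, the split into characters that see $U$ and those that see the channel $a_0-a_1=2T$, the expansion of $\PAF_{3S+2T}$, and the mod-$4$ argument for $C(k)$ are all as in the paper. The one variation is in fixing $\sum T_a$ and $\sum T_b$: you read $(\sum T_a)^2+(\sum T_b)^2=49$ off the trivial character of the $T$-condition and then use $U=3S+2T$ modulo $3$, whereas the paper compresses onto the $y$-coordinate and uses the length-three energy $262$; this gives the same equation, since $\sum T_a=(13-3u)/2$.
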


\begin{proof}
The four possibilities for $(a_0,a_1)$ give the bijection and the
pointwise formulas. Characters trivial on the $y$-coordinate see $U_a$;
the other characters see $a_0-a_1=2T_a$, since
$\zeta_3+\zeta_3^{-1}=-1$. The two autocorrelation conditions therefore
give, and are necessary for, spectral sum $196$ at every nontrivial
character of $G$. The assumed sums supply the trivial character.

To determine the sums of $T_a,T_b$, compress onto the $y$-coordinate.
The two symmetric triples have the form $(13-2u,u,u)$ and
$(15-2v,v,v)$, with $u,v$ odd, and their combined energy is $262$.
Consequently
$((3u-13)/2)^2+((3v-15)/2)^2=49$.
The integer representations of $49$ as two squares, with the two
coordinates congruent to $1,0$ modulo three, give $u=9,v=5$.
Thus $\sum T_a=(-5-9)/2=-7$ and $\sum T_b=0$.
The group matrices commute with one another and their transposes, so the
autocorrelation identity for $T$
gives $WW^{\top}=49I_{66}$.

The sums of $S$ follow from $U=3S+2T$. A ternary sequence's support
size has the parity of its sum; the combined support size is its
energy $49$. Also $S_sT_s=-1$ on the support of $T_s$, giving $C(0)=-98$.
Expanding the autocorrelation of $U=3S+2T$ gives
$9P+4(\PAF_{T_a}+\PAF_{T_b})+6C$, hence $3P+2C=2$ off zero.
Since every binary autocorrelation of length $33$ is $1$ modulo four,
$P\equiv2\pmod4$ and $C\equiv4\pmod6$ there.
\end{proof}

The weighing-matrix condition must be coupled to the compressed signs
through these identities. An arbitrary $W(66,49)$ does not supply a
reflection-invariant D-optimal pair, and the criterion alone does not
exclude such pairs.

\end{document}